\theoremstyle{plain}
\newtheorem{lem}{Lemma}[section]
\newtheorem{cor}[lem]{Corollary}
\newtheorem{prop}[lem]{Proposition}
\newtheorem{thm}[lem]{Theorem}
\theoremstyle{definition}
\newtheorem{defn}[lem]{Definition}
\newtheorem{ex}[lem]{Example}
\newtheorem{fact}[lem]{Fact}
\newtheorem{para}[lem]{}
\newtheorem{remark}[lem]{Remark}
\newtheorem{notation}[lem]{Notation}
\newtheorem{assumption}[lem]{Assumption}
\renewenvironment{proof}{\vspace{1ex}\noindent{\textbf{Proof:}}\hspace{0.5em}}
{\hfill\qed\vspace{1ex}}
\newcommand{\cat}[1]{\mathcal{#1}}
\newcommand{\cata}{\cat{A}}
\newcommand{\catb}{\cat{B}}
\newcommand{\catbc}{\cat{B}_C}
\newcommand{\ext}{\operatorname{Ext}}	
\newcommand{\rhom}{\mathbf{R}\!\operatorname{Hom}}	
\newcommand{\lotimes}{\otimes^{\mathbf{L}}}
\newcommand{\HH}{\operatorname{H}}
\newcommand{\Hom}{\operatorname{Hom}}
\newcommand{\s}{\mathfrak{S}}
\newcommand{\tor}{\operatorname{Tor}}
\newcommand{\xra}{\xrightarrow}
\renewcommand{\geq}{\geqslant}
\renewcommand{\hom}{\Hom}
\begin{document}

\title{Semidualizing DG Modules over Tensor Products}
\author{Hannah Altmann}
%

\thanks{This material is based on work supported by North Dakota EPSCoR and National Science Foundation Grant EPS-0814442}

\subjclass[2010]{13D02, 13D07, 13D09}

\keywords{Bass classes, DG algebras, semidualizing DG modules, tensor products}

\maketitle

\begin{abstract}  We study the existence of nontrivial semidualizing DG modules over tensor products of DG algebras over a field. In particular, this gives a lower bound on the number of semidualizing DG modules over the tensor product. 
\end{abstract}

\makeatletter
\@setabstract
\makeatother


\section{Introduction} 
\begin{assumption} Let $R$ be a commutative, noetherian ring with identity.
\end{assumption}

\noindent Semidualizing modules were introduced by Foxby {\cite{foxby:gmarm}}, while Vasconcelos {\cite{vasconcelos:dtmc}} and Golod {\cite{golod:gdagpi}} rediscovered them independently and applied them in different contexts. A finitely generated $R$-module $C$ is \textit{semidualizing} over $R$ if the homothety map $\chi_C^R:R\to \hom_R(C,C)$ is an isomorphism and $\ext_R^i(C,C)=0$ for all $i>0$. Let $\s_0(R)$ denote the set of isomorphism classes of semidualizing $R$-modules. The size of $\s_0(R)$ measures the severity of the singularity of a ring, specifically how close a ring is to being Gorenstein. If $\s_0(R)$ is large, then $R$ is far from being Gorenstein. If $\s_0(R)$ is small, then $R$ is in a sense close to being Gorenstein. For instance, if $R$ is Gorenstein and local, then $|\s_0(R)|=1$.

Throughout this paper, we use the more general definition of semidualizing DG module. (``DG'' is short for ``Differential Graded''. See Section 2 for relevant background information.) The idea for the definition is essentially from Christensen and Sather-Wagstaff {\cite{christensen:dvke}}; see also {\cite{nasseh:lrfsdc}}. The DG setting has been useful for answering questions about rings. For instance, Nasseh and Sather-Wagstaff {\cite{nasseh:lrfsdc}} were able to answer Vasconcelos' question {\cite[p. 97]{vasconcelos:dtmc}}, showing a local ring has only finitely many isomorphism classes of semidualizing modules.
 
\begin{defn}
Let $A$ be a DG $R$-algebra. A \textit{semidualizing} DG $A$-module is a homologically finite DG $A$-module $C$ that admits a degreewise finite semifree resolution over $A$ such that the homothety morphism $\chi_C^A:A\to \rhom_A(C,C)$ is an isomorphism in the derived category $D(A)$. Let $\s(A)$ denote the set of shift-isomorphism classes of semidualizing DG $A$-modules in $D(A)$. 
\end{defn}

\noindent What follows is the main result of this paper, which is proven in \ref{proof141021a}. The big picture idea here is that the singularity of the ring $A'\otimes_k A''$ is at least as bad as the singularities of both $A'$ and $A''$ combined.

\begin{thm}\label{thm141112b}
 Let $k$ be a field. Let $A'$ and $A''$ be local DG $k$-algebras such that $A'_0$ and $A''_0$ are noetherian. Let $M'\in D^f_b(A')$ and $M''\in D^f_b(A'')$.
  \enumerate
  \item[(a)] One has $M'\otimes_k M''$ is semidualizing over $A'\otimes_k A''$ if and only if $M'$ is semidualizing over $A'$ and $M''$ is semidualizing over $A''$.
  \item[(b)] The map $\psi:\s(A')\times \s(A'')\to \s(A'\otimes_k A'')$ defined by $\psi(C',C'')=C'\otimes_k C''$ is well-defined and injective.
\end{thm}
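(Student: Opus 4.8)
The engine for the entire theorem is a K\"unneth-type isomorphism over the field $k$: for $M',N'\in D^f_b(A')$ and $M'',N''\in D^f_b(A'')$ there is a natural isomorphism
$$\rhom_{A'\otimes_k A''}(M'\otimes_k M'',N'\otimes_k N'')\simeq\rhom_{A'}(M',N')\otimes_k\rhom_{A''}(M'',N'')$$
in $D(A'\otimes_k A'')$, together with the homology formula $\HH(M'\otimes_k M'')\cong\HH(M')\otimes_k\HH(M'')$. Both hold because $k$ is a field, so that $\otimes_k$ is exact and every $k$-module is free; I would establish or invoke these first, since everything below is bookkeeping built on them.

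For part (a) the finiteness conditions transfer in both directions: the $k$-tensor product of degreewise finite semifree resolutions of $M'$ and $M''$ is such a resolution of $M'\otimes_k M''$ over $A'\otimes_k A''$, and the homology formula shows $M'\otimes_k M''$ is homologically finite exactly when both factors are (we may assume $M',M''$ are homologically nonzero, else the tensor product is zero and cannot be semidualizing). The crux is that under the displayed isomorphism the homothety $\chi^{A'\otimes_k A''}_{M'\otimes_k M''}$ is identified with $\chi^{A'}_{M'}\otimes_k\chi^{A''}_{M''}$. On homology, $\HH(\chi^{A'}_{M'})\otimes_k\HH(\chi^{A''}_{M''})$ is bijective iff each factor is, by elementary linear algebra over $k$; this gives the stated equivalence.

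For part (b), well-definedness is immediate from (a): $C'\otimes_k C''$ is semidualizing when $C',C''$ are, and $\shift^aC'\otimes_k\shift^bC''\simeq\shift^{a+b}(C'\otimes_kC'')$ shows the rule respects shift-isomorphism classes. For injectivity, assume $C'\otimes_k C''\simeq\shift^n(D'\otimes_k D'')$. Applying $\rhom_{A'\otimes_k A''}(D'\otimes_k D'',-)$, then the K\"unneth isomorphism on the left and $\rhom_{A'}(D',D')\simeq A'$, $\rhom_{A''}(D'',D'')\simeq A''$ on the right, yields
$$\rhom_{A'}(D',C')\otimes_k\rhom_{A''}(D'',C'')\simeq\shift^{n}(A'\otimes_k A'').$$

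It remains to divide this product. Applying the derived fibre $-\lotimes_{A'\otimes_k A''}k$ and K\"unneth turns the last isomorphism into one of bounded complexes of $k$-spaces whose two Poincar\'e (Laurent) polynomials multiply to $t^{n}$; as these have nonnegative integer coefficients, each is a single monomial of coefficient one, say $t^{p}$ and $t^{q}$ with $p+q=n$. By minimality of semifree resolutions over the local DG algebra $A'$, a module with derived fibre $\shift^{p}k$ admits a one-generator minimal resolution, so $\rhom_{A'}(D',C')\simeq\shift^{p}A'$ and likewise $\rhom_{A''}(D'',C'')\simeq\shift^{q}A''$. I would then feed this into the evaluation morphism $\shift^{p}D'\simeq D'\lotimes_{A'}\rhom_{A'}(D',C')\to C'$ and verify it is an isomorphism by applying $\rhom_{A'}(D',-)$ and using the adjunction triangle identity together with $\rhom_{A'}(D',D')\simeq A'$, obtaining $C'\simeq\shift^{p}D'$ and symmetrically $C''\simeq\shift^{q}D''$. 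The main obstacle is exactly this division step: passing from the product isomorphism to the individual free modules, and then back from $\rhom_{A'}(D',C')\simeq\shift^{p}A'$ to $C'\simeq\shift^{p}D'$, where the local hypothesis and the semidualizing axioms are genuinely needed.
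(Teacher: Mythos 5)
Your part (a) is essentially the paper's own argument (Theorem \ref{thm141016a}): the K\"unneth-type isomorphism $\eta$ identifies $\chi^{A}_{M'\otimes_k M''}$ with $\chi^{A'}_{M'}\otimes_k\chi^{A''}_{M''}$, and exactness of $\otimes_k$ converts the problem to linear algebra; the only ingredient you leave tacit is that in the converse direction one must first know $\rhom_{A'}(M',M')\not\simeq 0$ and $\rhom_{A''}(M'',M'')\not\simeq 0$ before the ``iff'' of the K\"unneth factor test applies, which the paper gets from its support lemma (Lemma \ref{lem140724a}). For part (b) you take a genuinely different route. The paper never divides the isomorphism $\rhom_{A'}(D',C')\otimes_k\rhom_{A''}(D'',C'')\simeq\shift^n A$ directly over $A=A'\otimes_kA''$; instead it introduces the relation $\approx$ of mutual Bass-class membership, proves $C'\otimes_kC''\approx D'\otimes_kD''$ iff $C'\approx D'$ and $C''\approx D''$ (Theorems \ref{lem140922a} and \ref{lem140718e}), and only then, separately over each \emph{local} factor, converts $\approx$ into shift-isomorphism via minimal semifree resolutions and Poincar\'e series (Lemma \ref{lem141113a}). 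The payoff of the paper's detour is that the delicate minimality/Poincar\'e argument is only ever run over a local DG algebra; your approach is shorter and avoids the Bass-class formalism, at the cost of having to perform the ``division'' over $A$ itself, which need not be local.

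That is where the one genuine problem in your write-up sits: the functor $-\lotimes_{A'\otimes_kA''}k$ does not exist in general, because there need be no augmentation $A\to k$. The hypotheses only make $\HH_0(A')$ and $\HH_0(A'')$ local $k$-algebras; their residue fields $k'$ and $k''$ can be proper (even infinite) extensions of $k$ (e.g.\ $A'=\bbc$ over $k=\bbr$), and $\HH_0(A)\cong\HH_0(A')\otimes_k\HH_0(A'')$ need not be local at all. The repair is routine but must be made: apply $-\lotimes_A(k'\otimes_kk'')$, use Lemma \ref{lem141028c} to rewrite this as $(X'\lotimes_{A'}k')\otimes_k(X''\lotimes_{A''}k'')\simeq\shift^n(k'\otimes_kk'')$, and count ranks of free modules over the nonzero ring $k'\otimes_kk''$ (equivalently, count basis elements of minimal semifree resolutions of $X'$ over $A'$ and $X''$ over $A''$, exactly as in the paper's Lemma \ref{lem141113a}) to get $\beta'_p\beta''_q=\delta_{p+q,n}$ and hence single monomials. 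With that correction, your closing step recovering $C'\simeq\shift^pD'$ from $\rhom_{A'}(D',C')\simeq\shift^pA'$ is fine, provided you note that the final ``$\rhom_{A'}(D',-)$ reflects the vanishing of the cone'' again invokes Lemma \ref{lem140724a}, which is where the mildly noetherian hypothesis enters.
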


Note that part (a) is a consequence of the K\"unneth Theorem, properly interpreted. However, parts (b) and (c) use an extension of Foxby and Christensen's Bass classes to the DG setting.

\section{Background} 
For a thorough introduction to DG algebras see any of the following {\cite{avramov:ifr,avramov:dgha,beck:kddga,beck:sgidgca,nasseh:lrfsdc}}. Below is a quick review of a few of the necessary definitions.

\begin{remark} In this paper $R$-complexes are indexed homologically, and $|a|=i$ means $a\in X_i$.
\end{remark}

\begin{defn} A \textit{commutative differential graded algebra over $R$} (``DG $R$-algebra'' for short) is an $R$-complex $A$ equipped with binary operations $\mu^A : A_i\times A_j\to A_{i+j}$ with $ab:= \mu^A(a,b)$ satisfying the following properties:\\
\indent \textbf{associative:} for all $a,b,c\in A$ we have $(ab)c=a(bc)$;\\
\indent \textbf{distributive:} for all $a,b,c\in A$ such that $|a|=|b|$ we have $(a+b)c=ac+bc$ and $c(a+b)=ca+cb$;\\
\indent \textbf{unital:} there is an element $1\in A_0$ such that for all $a\in A$ we have $1a=a$;\\
\indent \textbf{graded commutative:} for all $a,b\in A$ we have $ab=(−1)^{|a||b|}ba$ and $a^2=0$ when $|a|$ is odd;\\
\indent \textbf{positively graded:} $A_i = 0$ for $i<0$; and\\
\indent \textbf{Leibniz Rule:} for all $a,b\in A$ we have $\partial^A_{|a|+|b|}(ab)=\partial^{A}_{|a|}(a)b+(-1)^{|a|}a\partial^{B}_{|b|}(b)$.\\ 
Given a DG $R$-algebra $A$, the \textit{underlying algebra} is the graded commutative $R$-algebra $A^{\natural} =\oplus_{i=0}^{\infty} A_i$. We say that $A$ is \textit{weakly noetherian} if $H_0(A)$ is noetherian and the $H_0(A)$-module $H_i(A)$ is finitely generated for all $i\geq 0$. We say that $A$ is \textit{mildly noetherian} if $A$ is weakly noetherian and $A_0$ is noetherian. We say that $A$ is \textit{local} if it is weakly noetherian, $R$ is local, and the ring $\HH_0(A)$ is a local $R$-algebra. A \textit{morphism} of DG $R$-algebras is a chain map $f:A\to B$ between DG $R$-algebras respecting products and multiplicative identities: $f(aa′)=f(a)f(a′)$ and $f(1)=1$. 
\end{defn}

\begin{assumption}  For the rest of this section $A$ is a DG $R$-algebra and $k$ is a field.
\end{assumption}

\begin{defn} A \textit{differential graded module over $A$}
(``DG $A$-module'' for short) is an $R$-complex $M$ equipped with binary operations
$\mu^M:A_i\times M_j\to M_{i+j}$ with $am:=\mu^M(a,m)$ satisfying the following properties:\\
\indent \textbf{associative:} for all $a,b\in A$ and $m\in M$ we have $(ab)m = a(bm)$\\
\indent \textbf{distributive:} for all $a,b\in A$ and $m,n\in M$ such that $|a|=|b|$ and $|m|=|n|$,
we have $(a+b)m=am+bm$ and $a(m+n)=am+an$;\\
\indent \textbf{unital:} for all $m\in M$ we have $1m=m$;\\
\indent \textbf{Leibniz Rule:} for all $a\in A$ and $m\in M$ we have $\partial_{|a|+|m|}(am)=\partial_{|a|} (a)m+ (−1)^{|a|} a∂_{|m|}(m)$.\\
The \textit{underlying $A^{\natural}$}-module associated to $M$ is the $A^{\natural}$ -module $M^{\natural}=\oplus_{i=-\infty}^{\infty} M_i$. Let $D(A)$ denote the derived category of DG $A$-modules. Isomorphisms in $D(A)$ are identified by the symbol $\simeq$.
\end{defn}

\begin{defn} A DG $A$-module $M$ is \textit{degreewise finite}, denoted $M\in D^f(A)$, if $\HH_i(M)$ is finitely generated over $\HH_0(A)$ for all $i$. We say $M$ is \textit{homologically bounded}, denoted $M\in D_b(A)$, if $\HH_i(M)=0$ for $|i|\gg 0$. Additionally, $M$ is \textit{homologically finite} if $M\in D_b(A)\cap D^f(A)$, i.e., $M\in D^f_b(A)$. We say $M$ is homologically bounded below, denoted $M\in D_{+}(A)$, if $\operatorname{inf}(M)>-\infty$. Similarly, $M$ is homologically bounded above, denoted $M\in D_{-}(A)$, if $\operatorname{sup}(M)<\infty$. 
\end{defn}

\begin{fact} Let $A'$ and $A''$ be DG $R$-algebras. Let $N'$ be a DG $A'$-module and $N''$ be a DG $A''$-module. Then
\begin{enumerate}
 \item[(a)] $A'\otimes_R A''$ is a DG $R$-algebra via the multiplication $(a'\otimes a'')(b'\otimes b'')=(-1)^{|a''||b'|}(a'b')\otimes (a''b'')$, and
 \item[(b)] the complex $N'\otimes_R N''$ is a DG $A'\otimes_R A''$-module via the multiplication $(a'\otimes a'')(n'\otimes n'')=(-1)^{|a''||n'|}(a'n')\otimes (a''n'')$.
\end{enumerate}
\end{fact}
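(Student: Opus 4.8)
The plan is to verify directly that the sign-twisted operations satisfy the six algebra axioms and four module axioms listed in the definitions above, treating $A'\otimes_R A''$ and $N'\otimes_R N''$ as the usual tensor products of $R$-complexes. Recall that, as a complex, $(A'\otimes_R A'')_n=\bigoplus_{i+j=n}A'_i\otimes_R A''_j$ with differential $\partial(x'\otimes x'')=\partial(x')\otimes x''+(-1)^{|x'|}x'\otimes\partial(x'')$, and similarly for $N'\otimes_R N''$. Positive grading is then immediate from that of $A'$ and $A''$, and since $|a'\otimes a''|=|a'|+|a''|$ the proposed product sends a pair of homogeneous elements of degrees $p$ and $q$ into degree $p+q$, so it is a degree-$0$ operation; it is well-defined and $R$-bilinear directly from the tensor relations.

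For part (a) I would first dispatch the routine axioms. Associativity of $(a'\otimes a'')(b'\otimes b'')(c'\otimes c'')$ reduces to the congruence $|a''||b'|+|a''||c'|+|b''||c'|\equiv |b''||c'|+|a''||b'|+|a''||c'| \pmod 2$, which holds on inspection; distributivity follows from $R$-bilinearity together with distributivity in each factor; and $1\otimes 1$ is a two-sided unit because $|1|=0$ kills the twisting sign. Graded commutativity is the first place the Koszul sign does real work: expanding $(a'\otimes a'')(b'\otimes b'')$, commuting within each factor, and comparing with $(-1)^{(|a'|+|a''|)(|b'|+|b''|)}(b'\otimes b'')(a'\otimes a'')$ leaves a sign discrepancy of $2|a'||b''|\equiv 0$, so the identity holds.

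The genuinely delicate axiom is $x^2=0$ for $x$ homogeneous of odd degree, and this is the step I expect to be the main obstacle, because a general homogeneous $x$ is a sum $\sum_i a'_i\otimes a''_i$ and the squares of the individual simple tensors need not vanish. Writing $x^2=\sum_{i,j}(-1)^{|a''_i||a'_j|}(a'_ia'_j)\otimes(a''_ia''_j)$, the diagonal terms vanish because $|a'_i|+|a''_i|$ odd forces one of $(a'_i)^2,(a''_i)^2$ to be zero, while the pair of off-diagonal terms indexed by $(i,j)$ and $(j,i)$ combine, after using graded commutativity in $A'$ and $A''$, with relative sign $(-1)^{(|a'_i|+|a''_i|)(|a'_j|+|a''_j|)}=-1$ (both exponents being odd), and hence cancel. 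The remaining axiom is the Leibniz rule: I would expand both $\partial\big((a'\otimes a'')(b'\otimes b'')\big)$ and $\partial(a'\otimes a'')\,(b'\otimes b'')+(-1)^{|a'|+|a''|}(a'\otimes a'')\,\partial(b'\otimes b'')$ into four terms each and check that the twisting sign $(-1)^{|a''||b'|}$ together with the tensor-differential sign $(-1)^{|x'|}$ matches them term by term; the only subtlety is rewriting exponents such as $(-1)^{|\partial(a'')||b'|}$ using $|\partial(a'')|=|a''|-1$.

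For part (b) the verifications mirror those of (a), now with $N'\otimes_R N''$ in the role of the third tensor factor: module associativity $\big((a'\otimes a'')(b'\otimes b'')\big)(n'\otimes n'')=(a'\otimes a'')\big((b'\otimes b'')(n'\otimes n'')\big)$ reduces to the same sign congruence as algebra associativity, using associativity of the actions $A'_i\times N'_j\to N'_{i+j}$ and $A''_i\times N''_j\to N''_{i+j}$ in place of the products on $A'$ and $A''$, while distributivity, unitality, and the Leibniz rule transcribe verbatim. In fact (b) is lighter than (a): the module axioms include neither graded commutativity nor the odd-square condition, so the off-diagonal cancellation — the one genuinely delicate point in (a) — does not arise.
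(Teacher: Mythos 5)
Your verification is correct, and since the paper records this statement as a background Fact with no proof (it is standard material, deferred to the cited DG references), the direct axiom-by-axiom check you give is exactly the intended argument, with all the Koszul-sign computations (associativity, graded commutativity, and the Leibniz rule via $|\partial(a'')|=|a''|-1$) coming out right. You also correctly isolate and handle the one point such checks usually gloss over: the condition $x^2=0$ for odd homogeneous $x$ must be verified on sums $\sum_i a_i'\otimes a_i''$ of simple tensors, where your argument is sound --- each diagonal term dies by the odd-square axiom in whichever factor has odd degree, and the off-diagonal terms pair off with relative sign $(-1)^{n\cdot n}=-1$ (for $n$ the odd total degree) and cancel.
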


\begin{defn} A \textit{semibasis} for a DG $A$-module $M$ is a set $E=\bigsqcup_{i=0}^{\infty} E^i$ such that $\partial(E^i)\subseteq A E^{i-1}$ for each $i\geq 0$ (we set $A E^{-1}=0)$ and $E$ is a basis of the $A^{\natural}$-module $M^{\natural}$. We say $M$ is \textit{semifree} if it has a semi-basis. A \textit{semifree resolution} of a DG $A$-module $N$ is a quasiisomorphism $F\xra{\simeq} N$ such that $F$ is semi-free over $A$. We say that a DG $A$-module $M$ is \textit{semiprojective} if $\hom_A(M,-)$ respects surjective quasiisomorphisms. A \textit{semiprojective resolution} of a DG $A$-module $N$ is a quasiisomorphism $P\xra{\simeq} N$ such that $P$ is semiprojective over $A$.
\end{defn}

\begin{fact}\label{fact141013c} Let $M$ and $N$ be DG $A$-modules.
\begin{enumerate}
 \item[(a)] There exists a semifree resolution $F\xra{\simeq} M$.
 \item[(b)] If $A$ is weakly noetherian and $M\in D^f_{+}(A)$, then there exists a semifree resolution $F\xra{\simeq} M$ with semibasis $E$ such that $|E\cap M_n|<\infty$. We call such a resolution a ``degreewise finite semifree resolution.''
 \item[(c)] If $F$ is semifree over $A$, then $F$ is semiprojective over $A$.
 \item[(d)] If $M\in D_b(A)$, then $M$ is semifree over $A$ if and only if $M^{\natural}$ is a free graded $A^{\natural}$-module.
\end{enumerate}
\end{fact}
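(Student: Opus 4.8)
The four assertions are standard facts of DG homological algebra, and I would prove them in the stated order, since each later part reuses the construction in part~(a). Throughout I would build a semifree resolution by the method of iteratively adjoining basis elements to correct homology. For part~(a), the plan is: first choose cycles $\{z_\lambda\}$ in $M$ whose classes generate $\HH(M)$ over $\HH(A)$, let $F(0)$ be the free DG module on symbols $\{e_\lambda\}$ with $\partial e_\lambda=0$ and $|e_\lambda|=|z_\lambda|$, and define $\varphi_0\colon F(0)\to M$ by $e_\lambda\mapsto z_\lambda$. Since the $z_\lambda$ are cycles this is an $A$-linear chain map and $\HH(\varphi_0)$ is surjective. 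Then inductively, given $\varphi_n\colon F(n)\to M$, I would choose cycles of $F(n)$ whose classes generate $\ker\HH(\varphi_n)$, write their images in $M$ as boundaries $\partial y$, and adjoin new basis elements $e$ one degree higher with $\partial e$ equal to the chosen cycle and $\varphi_{n+1}(e)=y$; this kills the spurious homology while keeping a chain map. Setting $F=\bigcup_n F(n)$ with semibasis $E=\bigsqcup_n E^n$ (so $\partial E^n\subseteq A(E^0\sqcup\cdots\sqcup E^{n-1})$) and $\varphi=\varinjlim\varphi_n$, one checks $\HH(\varphi)$ is bijective because homology commutes with the filtered colimit and every spurious class dies at a finite stage.

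For part~(b) I would rerun the construction of~(a) making every choice finite. Because $A$ is weakly noetherian, $\HH_0(A)$ is noetherian and each $\HH_i(A)$ is finitely generated over it; since $M\in D^f_+(A)$, its homology is bounded below and degreewise finite over $\HH_0(A)$. Organizing the induction by internal degree starting at $s=\inf M$, at each stage only finitely many generators are needed to hit a finitely generated homology module, and the kernels to be killed are submodules of degreewise finitely generated $\HH_0(A)$-modules, hence again finitely generated by noetherianity. This keeps $|E\cap M_n|$ finite for all $n$; the noetherian hypothesis is used precisely to guarantee these kernels stay finitely generated.

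For part~(c), I would filter a semifree $F$ by $F(n)=A\langle E^0\sqcup\cdots\sqcup E^n\rangle$. Each subquotient $F(n)/F(n-1)$ is free on a basis with zero induced differential, hence a direct sum of shifts $\Sigma^{|e|}A$, which is visibly semiprojective, and the inclusions $F(n-1)\hookrightarrow F(n)$ are degreewise split since the quotient is graded-free. I would then verify two closure properties: a degreewise split extension of semiprojectives is semiprojective (apply $\hom_A(-,N)$ to get a short exact sequence of complexes, natural in $N$, and compare across a surjective quasi-isomorphism by the long exact sequence), and the countable union $F=\bigcup_n F(n)$ is semiprojective via the Milnor sequence $0\to{\varprojlim}^{1}\hom_A(F(n),N)\to\hom_A(F,N)\to\varprojlim\hom_A(F(n),N)\to0$; the transition maps are surjective because the inclusions are degreewise split, so the ${\varprojlim}^{1}$ term vanishes and $\hom_A(F,-)$ inherits preservation of surjective quasi-isomorphisms from the finite stages.

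For part~(d) the forward implication is immediate, as a semibasis is in particular an $A^\natural$-basis of $M^\natural$. For the converse I would exploit that $A$ is positively graded: if $M^\natural$ is free on a homogeneous basis $\{b_\lambda\}$, then the $A^\natural$-span $F(p)$ of $\{b_\lambda:|b_\lambda|\le p\}$ is a DG submodule, since $\partial b_\lambda$ sits in degree $|b_\lambda|-1$ and $A_i=0$ for $i<0$ force it into $F(p-1)$; the subquotients $F(p)/F(p-1)$ are then free with differential induced from $A$ alone, giving a semifree filtration. I expect this converse to be the main obstacle: the internal-degree filtration is genuinely semifree only when it is bounded below and can be indexed by $\mathbb{N}$ with a true bottom stage of cycles, so the essential point is to use the homological boundedness of $M$ together with the positivity of $A$ to anchor the filtration at a bottom. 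This is the one place where a mere graded basis must be upgraded to a differential-compatible semibasis, and it is exactly where the hypothesis $M\in D_b(A)$ must do real work rather than being formal.
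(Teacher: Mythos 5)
The paper offers no proof of this statement: it is quoted as a background Fact, with the standard DG references (Avramov, Avramov--Foxby--Halperin, Beck--Sather-Wagstaff) standing behind it, so your attempt can only be measured against the literature. Your proofs of (a)--(c) are exactly the standard arguments and are correct: the cell-attachment construction for (a), the degree-by-degree noetherian bookkeeping for (b), and the filtration-plus-Milnor-sequence argument for (c). One step in (b) deserves to be made explicit: your claim that ``the kernels to be killed are submodules of degreewise finitely generated $\HH_0(A)$-modules'' requires knowing that $\HH_i(F(n))$ is finitely generated over $\HH_0(A)$, which is not automatic from finiteness of the semibasis (the graded pieces $A_j$ need not be finite over $A_0$); it follows instead because the semibasis is degreewise finite and bounded below, so in each fixed homological degree only finitely many filtration stages contribute, and then long exact sequences together with finite generation of each $\HH_i(A)$ over the noetherian ring $\HH_0(A)$ give the conclusion. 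Your sketch is repairable on this point, but as written it elides where the weakly noetherian hypothesis actually enters.

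Your instinct about (d) is correct and, pressed further, is fatal to the statement as literally worded: homological boundedness cannot anchor your internal-degree filtration, and the converse of (d) is false if $D_b(A)$ means homologically bounded. Take $A=k[x]/(x^2)$ concentrated in degree $0$ and let $M$ have $M_i=A$ for all $i\in\mathbb{Z}$ with $\partial=$ multiplication by $x$. Then $\HH(M)=0$, so $M\in D_b(A)$, and $M^{\natural}$ is graded free; but $M$ is not semifree. Indeed, if it were, then by part (c) it would be semiprojective, and an acyclic semiprojective DG module is contractible (apply $\hom_A(M,-)$ to the surjective quasiisomorphism $M\to 0$ to see $[\operatorname{id}_M]=0$ in $\HH_0\hom_A(M,M)$); yet an $A$-linear homotopy $s$ with $xs_i+s_{i-1}x=\operatorname{id}$ is impossible, since evaluating at $1$ gives $xs_i(1)+xs_{i-1}(1)\in(x)\neq 1+(x)$ in $k[x]/(x^2)$. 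The correct hypothesis, as in the sources the paper cites, is that $M$ is bounded below as a complex ($M_i=0$ for $i\ll 0$); under that hypothesis your filtration $F(p)$ by basis degrees has a genuine bottom stage and your argument closes up without difficulty, and this weaker version is all the paper ever uses, since (d) is only applied to resolutions built as in (a) and (b). So the one genuine gap is that you correctly identified where the boundedness ``must do real work'' but did not notice that $M\in D_b(A)$ is the wrong boundedness: no proof exists from that hypothesis, and your writeup should replace it by boundedness of the underlying complex.
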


The following notion was defined for dualizing modules by Foxby {\cite{foxby:gdcmr}} and for an arbitrary semidualizing module or complex by Christensen {\cite{christensen:scatac}}. 

\begin{defn} Let $C\in \s(A)$ and $M\in D_b(A)$. Then $M$ is in the \textit{Bass class} $\catb_C(A)$ if the 
natural evaluation morphism $\xi_M^C:C\otimes_A^L \rhom_A(C,M)\to M$ is an isomorphism in $D(A)$ and $\rhom_A(C,M)\in D_b(A)$.

 Notice, if we are working over $R$, with $C\in \s_0(R)$ and $M$ an $R$-module, this translates as follows: $M$ is in the Bass class $\catb_C(R)$ if the natural evaluation homomorphism $\xi:C\otimes_R \hom_R(C,M)$ is an isomorphism and $\ext_R^i(C,M)=0=\tor_i^R(C,\hom_R(C,M))$ for all $i>0$.
\end{defn}
 Similarly, we have the following notions of the Auslander class and derived reflexive DG modules.
 
\begin{defn} Let $C\in \s(A)$ and $M\in D_b(A)$. Then $M$ is in the \textit{Auslander class} $\cata_C(A)$ if the natural morphism $\gamma_M^C:M\to \rhom_A(C,C\otimes_A M)$ is an isomorphism in $D(A)$ and $C\otimes_A^L M\in D_b(A)$.

 If we are working over $R$, with $C\in \s_0(R)$ and $M$ an $R$-module, this translates as follows: $M$ is in the Auslander class $\cata_C(R)$ if the natural map $\gamma_M^C:M\to \hom_R(C,C\otimes_A M)$ is an isomorphism and $\tor_i^R(C,M)=0=\ext_R^i(C,C\otimes_k M)$ for all $i>0$.
\end{defn}

\begin{defn} Let $C\in \s(A)$ and $M\in D_b^f(A)$. Then $M$ is \textit{derived $C$-reflexive} if the natural biduality morphism $\delta_M^C:M\to \rhom_A(\rhom_A(M,C),C)$ is an isomorphism in $D(A)$ and $\rhom_A(M,C)\in D_b^f(A)$.

 If we are working over $R$, where $C\in \s_0(R)$ and $M$ a finitely generated $R$-module, this translates to $\operatorname{G_C-dim}(M)<\infty$.
\end{defn}

The next result is useful for the proof of Theorem~\ref{thm141112b}.

\begin{lem}[\protect{\cite{Sather:scdm}}]\label{lem140724a}
 Assume $A$ is mildly noetherian and let $C\in \s(A)$. For $M\in D(A)$, the following conditions are equivalent.
 \begin{enumerate}
  \item[(a)] $M\simeq 0$,
  \item[(b)] $C\lotimes_A M\simeq 0$, and
  \item[(c)] $\rhom_A(C,M)\simeq 0$.
  \end{enumerate}
\noindent When $M\in D^f_{+}(A)$, these are equivalent to the following:
\begin{enumerate}
  \item[(d)] $\rhom_A(M,C)\simeq 0$.
 \end{enumerate}
\end{lem}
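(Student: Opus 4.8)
The plan is to reduce every equivalence to a single statement about supports, namely that a semidualizing DG module has full support. The implications (a)$\Rightarrow$(b), (a)$\Rightarrow$(c), and (under $M\in D^f_+(A)$) (a)$\Rightarrow$(d) are immediate, since $C\lotimes_A-$, $\rhom_A(C,-)$ and $\rhom_A(-,C)$ are triangulated functors and so carry $0$ to $0$; all of the content lies in the reverse implications, which I would handle uniformly.

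First I would set up support over the noetherian ring $\HH_0(A)$: for $M\in D(A)$ and $\p\in\spec\HH_0(A)$, use that $A$ is mildly noetherian to form the localization $A_\p$, and declare $\p\in\supp_A M$ exactly when $M_\p\not\simeq 0$. The inputs I need are the intersection formulas
\[
\supp_A(C\lotimes_A M)=\supp_A C\cap\supp_A M,\qquad \supp_A\rhom_A(C,M)=\supp_A C\cap\supp_A M,
\]
the second being available because $C$ is homologically finite, together with $\supp_A M=\emptyset\iff M\simeq 0$.

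Next I would prove the full-support statement $\supp_A C=\supp_A A$ directly from the definition of semidualizing. Because $C$ is homologically finite and bounded, localization commutes with $\rhom_A(C,C)$ (the relevant Hom's are finite products in each degree), so the homothety isomorphism $\chi_C^A\colon A\res\rhom_A(C,C)$ localizes to $A_\p\simeq\rhom_{A_\p}(C_\p,C_\p)$; hence $C_\p\simeq 0$ forces $A_\p\simeq 0$. Combined with the trivial inclusion $\supp_A M\subseteq\supp_A A$, this yields $\supp_A M\subseteq\supp_A C$ for every $M$. Granting the set-up, each reverse implication is then one line: (b) gives $\emptyset=\supp_A C\cap\supp_A M=\supp_A M$, so $M\simeq 0$; (c) is identical with $\rhom$ in place of $\lotimes$; and (d) follows from the symmetric formula $\supp_A\rhom_A(M,C)=\supp_A M\cap\supp_A C=\supp_A M$, where the finite argument $M\in D^f_+(A)$ now occupies the first slot.

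The main obstacle is the nonvanishing halves of the two intersection formulas, which is where the genuine input sits: over the local DG algebra $A_\p$, if $C_\p\not\simeq 0$ and $M_\p\not\simeq 0$ then $C_\p\lotimes_{A_\p}M_\p\not\simeq 0$, together with its $\rhom$ analogue. The detection statement $\supp_A M=\emptyset\iff M\simeq 0$ is by contrast elementary, since $\HH_i(M)_\p=\HH_i(M_\p)$ shows $\supp_A M=\emptyset$ forces $\HH_i(M)=0$ for all $i$. For $M\in D^f_+(A)$ the nonvanishing reduces to a residue-field computation — $\operatorname{inf}$ of a derived tensor product adds under passage to the residue field, a Nakayama-type argument — but for unbounded $M\in D(A)$ this ceases to be elementary, and moreover localization no longer commutes with the infinite products built into $\rhom_A(C,-)$; so I would instead build support directly from residue fields $k(\p)$ and establish the intersection formulas at that level. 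This residue-field support theory over mildly noetherian DG algebras is exactly the technical content drawn from \cite{Sather:scdm}; granting it, every equivalence in the lemma follows formally from $\supp_A C=\supp_A A$.
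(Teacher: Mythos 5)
The first thing to note is that the paper does not prove this lemma at all: it is imported verbatim from \cite{Sather:scdm}, which the bibliography lists as ``in preparation,'' so there is no in-paper argument to measure your proposal against. Judged on its own terms, your outline is a sensible reconstruction of how results of this type are proved --- a support theory over $\spec \HH_0(A)$, the full-support statement $\supp_A C=\supp_A A$ extracted from the homothety isomorphism, and intersection formulas reducing each vanishing condition to $\supp_A M=\emptyset$. The forward implications, the localization of $\chi_C^A$ (legitimate because the relevant Hom complexes are degreewise finite products once the target copy of $C$ is replaced by a bounded-above representative), and (b)$\Rightarrow$(a) via Foxby-type small support are all in order.

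Two genuine gaps remain. First, for (c)$\Rightarrow$(a) with $M$ unbounded, the formula you invoke, $\supp_A\rhom_A(C,M)=\supp_A C\cap\supp_A M$, is not the statement the literature actually supplies: $\rhom$ pairs naturally with \emph{cosupport}, via $\operatorname{cosupp}_A\rhom_A(C,M)=\supp_A C\cap\operatorname{cosupp}_A M$, and it is the cosupport detection theorem (a nonzero object has nonempty cosupport) that closes this implication. A support-only framework does not obviously deliver it, and you effectively concede this when you observe that localization fails against the infinite products built into $\rhom_A(C,-)$; the fix is to develop cosupport alongside support, not to hope for a support formula for $\rhom$. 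Second, and more structurally: the nonvanishing halves of the intersection formulas and the residue-field detection theorems for unbounded DG modules over a mildly noetherian DG algebra are precisely the content of \cite{Sather:scdm}, so your argument reduces the lemma to unproved inputs drawn from the very reference being cited. That is a reasonable division of labor for a blueprint, but it means the proposal locates where a proof would live rather than supplying one.
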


\begin{notation} Let $B,C$ be DG $A$-modules. Then $B\sim C$, if there exists an integer $n$ such that $B\simeq \Sigma^n C$.
\end{notation}

The next result is a DG version of a result of Araya et al. \cite[(5.3)]{takahashi:hiatsb}, see {\cite[Lemma 3.2]{frankild:rrhffd}}.

\begin{lem}\label{lem141113a} Assume $A$ is local and $B$, $C\in \s(A)$. Then $B\approx C$ if and only if $B\simeq \Sigma^n C$ for some integer $n$ in $D(A)$.
\end{lem}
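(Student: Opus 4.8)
The plan is to read $B \approx C$ as the symmetric reflexivity relation underlying the two cited results, namely that $B$ is derived $C$-reflexive and $C$ is derived $B$-reflexive. The backward implication is immediate: if $B \simeq \Sigma^n C$, then $\rhom_A(B,C) \simeq \Sigma^{-n} A \in D^f_b(A)$ and the biduality morphism $\delta^C_B$ is an isomorphism, and symmetrically, so $B \approx C$. The substance is the forward implication, and I would prove it with the formal-series calculus of Poincaré and Bass series over the local DG algebra $A$.

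Write $\ell$ for the residue field of the local ring $\HH_0(A)$ and, for homologically finite $M$, set $P_M(t) = \sum_i \dim_\ell \HH_i(\ell \lotimes_A M)\, t^i$ and $I_M(t) = \sum_i \dim_\ell \HH_{-i}(\rhom_A(\ell, M))\, t^i$. Because $A$ is local these are formal Laurent series with non-negative integer coefficients, bounded below, and $P_M \neq 0$ exactly when $M \not\simeq 0$. I would first record two multiplicativity formulas: $P_{M \lotimes_A N}(t) = P_M(t)\, P_N(t)$, coming from $\ell \lotimes_A (M \lotimes_A N) \simeq (\ell \lotimes_A M) \otimes_\ell (\ell \lotimes_A N)$; and, more importantly, $I_{\rhom_A(M,N)}(t) = P_M(t)\, I_N(t)$, coming from the adjunction $\rhom_A(\ell, \rhom_A(M,N)) \simeq \rhom_A(\ell \lotimes_A M, N)$ together with the formality of $\ell$-complexes. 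Applying the second formula to the homothety isomorphism $\rhom_A(C,C) \simeq A$ yields the identity $I_A = P_C I_C$, and likewise $I_A = P_B I_B$.

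Next I would feed the two reflexivities into the same formula. With $B^{\dagger} := \rhom_A(B,C)$, derived $C$-reflexivity gives $B \simeq \rhom_A(B^{\dagger}, C)$, hence $I_B = P_{B^{\dagger}} I_C$; dividing this against $I_A = P_B I_B$ and $I_A = P_C I_C$ (legitimately, in the fraction field of the Laurent-series ring) produces $P_C = P_B\, P_{B^{\dagger}}$. Symmetrically, with $C^{\dagger} := \rhom_A(C,B)$ and $C$ derived $B$-reflexive, one gets $P_B = P_C\, P_{C^{\dagger}}$. Substituting one into the other and cancelling the nonzero series $P_C$ gives $P_{B^{\dagger}}\, P_{C^{\dagger}} = 1$. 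The decisive point is then elementary: a product of two Laurent series with non-negative integer coefficients equals $1$ only if each is a single monomial $t^n$ with unit coefficient (compare lowest-degree terms, then argue that no higher term can survive without forcing a negative coefficient in the other factor). Thus $P_{B^{\dagger}}(t) = t^n$, so the minimal semifree resolution of $B^{\dagger}$ has exactly one basis element and $B^{\dagger} \simeq \Sigma^n A$ in $D(A)$; applying $\rhom_A(-,C)$ and biduality gives $B \simeq \rhom_A(\Sigma^n A, C) \simeq \Sigma^{-n} C$, which is exactly $B \sim C$.

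The routine verifications here are the two series formulas and the division step; the genuine obstacles are twofold. First, setting up the series calculus honestly over a local DG algebra: one needs existence and degreewise-finiteness of minimal semifree resolutions and the interpretation of the coefficients of $P_M$ as the corresponding ranks, so that $P_{B^{\dagger}} = t^n$ really forces $B^{\dagger} \simeq \Sigma^n A$. Second, the non-negativity argument that upgrades the formal identity $P_{B^{\dagger}} P_{C^{\dagger}} = 1$ to the monomial conclusion. I would also check at the outset that $B$, $C$, $B^{\dagger}$, $C^{\dagger}$, and $\rhom_A(C,C) \simeq A$ are all homologically finite so that their series are defined — this is precisely where the hypotheses $B, C \in \s(A)$ and the finiteness built into derived reflexivity are used.
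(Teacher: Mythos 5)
Your argument is sound, but it proves the lemma under a different reading of the (undefined) relation $\approx$ and by a genuinely different route than the paper. The paper's proof reveals that $B\approx C$ is meant as mutual Bass-class membership, $B\in\catb_C(A)$ and $C\in\catb_B(A)$: from this it extracts the tensor identity $C\simeq C\lotimes_A\rhom_A(C,B)\lotimes_A\rhom_A(B,C)$, passes to minimal semifree resolutions via Apassov, and reads off $P_C=P_C\,P_{\rhom_A(C,B)}\,P_{\rhom_A(B,C)}$ directly from the multiplicativity of Poincar\'e series under $\lotimes_A$ --- no Bass series appear. You instead read $\approx$ as mutual derived reflexivity (the Araya--Takahashi--Yoshino/Frankild--Sather-Wagstaff form cited just before the lemma) and route the computation through the formula $I_{\rhom_A(M,N)}=P_M I_N$; both proofs then finish with the same endgame, namely that Laurent series with non-negative integer coefficients multiplying to $1$ must be monomials $t^{\pm n}$, forcing $\rhom_A(C,B)\simeq\Sigma^n A$. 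Your route costs more: besides establishing the Bass-series formula over a local DG algebra, you must know $I_C\neq 0$, i.e.\ $\rhom_A(\ell,C)\not\simeq 0$, to cancel $I_C$ --- a Nakayama-type nonvanishing the paper never needs, since on the tensor side one only cancels $P_C\neq 0$, which follows immediately from minimality. What your route buys is a proof of the reflexivity version of the statement, which is the form actually quoted from the literature; since for semidualizing DG modules both conditions are equivalent to $B\sim C$, the two lemmas agree in the end, but be aware that your starting hypotheses are not literally the ones the paper's proof uses, and if $\approx$ is taken to mean mutual Bass-class membership your argument would need the (simpler) substitution $P_B=P_C\,P_{\rhom_A(C,B)}$ in place of the Bass-series bookkeeping.
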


\begin{proof} One implication is straightforward since $C\in \catb_C(A)$ and $B\in \catb_B(A)$.

For the other implication, assume $B\approx C$. Thus $B\in \catb_C(A)$ and $C\in \catb_B(A)$. Thus $B\simeq C\lotimes_A \rhom_A(C,B)$ and $C\simeq B\lotimes_A \rhom_A(B,C)\simeq C\lotimes_A \rhom_A(C,B)\lotimes_A \rhom_A(B,C)$. By Apassov~{\cite[Proposition 2]{apassov:hddgr}}, there exists minimal semifree resolutions $F\xra{\simeq} C$, $G\xra{\simeq} \rhom_A(C,B)$, and $L\xra{\simeq} \rhom_A(B,C)$ over $A$. Since minimal semifree resolutions are unique up to isomorphism, and $F\simeq C \simeq C\lotimes_A \rhom_A(C,B)\lotimes_A \rhom_A(B,C) \simeq F\otimes_A G\otimes_A L$, we have $F\simeq F\otimes_A G\otimes_A L$. By use of the Poincar\'{e} series, it follows that $\rhom_A(C,B)\simeq G\cong \Sigma^n A$ and $L\cong \Sigma^{-n} A$ for some integer $n$. Thus $B\simeq C\otimes_A \Sigma^n A \simeq \Sigma^{n} C$.
\end{proof}

The remainder of this section focuses on $k$-complexes.

\begin{lem}\label{lem141013a} Let $L$ be a $k$-complex. Then $L$ is semiprojective over $k$.
\end{lem}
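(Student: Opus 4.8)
The plan is to show that $L$ is in fact semifree over $k$, and then to invoke Fact~\ref{fact141013c}(c), which says that a semifree DG module is semiprojective. Thus the entire task reduces to exhibiting a semibasis for $L$ over $k$, and the only input beyond the definitions will be that $k$ is a field, so that every $k$-subspace admits a complement.

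To build the semibasis I would work in each homological degree $n$ separately. Writing $Z_n=\ker(\partial_n)$ and $B_n=\im(\partial_{n+1})$, I choose complements to obtain an internal direct sum decomposition $L_n=C_n\oplus H_n\oplus B_n$, where $H_n$ is a complement of $B_n$ inside $Z_n$ and $C_n$ is a complement of $Z_n$ inside $L_n$. Since $Z_n=\ker(\partial_n)=H_n\oplus B_n$, the differential is injective on $C_n$ with image $\im(\partial_n)=B_{n-1}$, so it restricts to an isomorphism $\partial_n\colon C_n\xrightarrow{\cong} B_{n-1}$.

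Now I would assemble a semibasis using only two filtration layers. For each $n$, pick a $k$-basis of $H_n$ and a $k$-basis $\{c_\alpha\}$ of $C_n$; then $\{\partial_n(c_\alpha)\}$ is a $k$-basis of $B_{n-1}$. I place all the chosen basis vectors of the $H_n$, together with all the boundaries $\partial(c_\alpha)$, into $E^0$, and I place all the $c_\alpha$ into $E^1$. Because $\partial^2=0$, every element of $E^0$ is a cycle, so $\partial(E^0)=0=kE^{-1}$; and each $c_\alpha\in E^1$ satisfies $\partial(c_\alpha)\in E^0\subseteq kE^0$, so $\partial(E^1)\subseteq kE^0$. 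Finally $E^0\sqcup E^1$ ranges over bases of all the summands $B_n,H_n,C_n$, hence is a $k$-basis of $L^{\natural}=\bigoplus_n L_n$. This is precisely a semibasis, so $L$ is semifree and therefore semiprojective over $k$.

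There is essentially no hard step here: everything rests on the fact that over a field every subspace splits off, which lets one decompose $L$ into its homology and its contractible ``disk'' pieces. The one point I would flag as requiring care is the bookkeeping of the semibasis filtration—one must check that putting the boundaries $\partial(c_\alpha)$ in the bottom layer $E^0$ and their preimages $c_\alpha$ in $E^1$ genuinely yields $\partial(E^i)\subseteq kE^{i-1}$—and that no finiteness or boundedness hypothesis on $L$ is needed, since complements exist for infinite-dimensional spaces and unbounded complexes are handled identically.
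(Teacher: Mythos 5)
Your proof is correct, but it takes a genuinely different route from the paper's. The paper does not construct a semibasis at all: it quotes a semiprojectivity criterion from Avramov--Foxby--Halperin \cite{avramov:dgha}, observing that $L^{\natural}$ is free (hence projective) over $k^{\natural}=k$ and that $\HH(L)$ and $\operatorname{B}(L)$ are projective DG $k$-modules, whence $L$ is semiprojective by {[}3.9.7{]} of that reference. You instead prove the stronger statement that $L$ is semi\emph{free}, by splitting each $L_n$ as $C_n\oplus H_n\oplus B_n$ over the field and assembling an explicit two-layer semibasis ($E^0$ consisting of cycles drawn from the $H_n$ and $B_n$, $E^1$ consisting of the lifts $c_\alpha$ of the boundaries), and then invoking Fact~\ref{fact141013c}(c). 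Your bookkeeping checks out against the paper's definition of semibasis: $\partial(E^0)=0=kE^{-1}$, $\partial(E^1)\subseteq E^0\subseteq kE^0$, the higher layers $E^i$ for $i\geq 2$ may be taken empty, and $E^0\sqcup E^1$ is a graded basis of $L^{\natural}$ because the three summands exhaust each $L_n$; no boundedness or finiteness on $L$ is used. What your argument buys is self-containedness and the extra information that every $k$-complex is semifree; what the paper's buys is brevity, at the cost of leaning on an external criterion. Both ultimately rest on the same fact that every subspace of a $k$-vector space admits a complement.
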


\begin{proof} Notice that  $L^{\natural}$ is free over $k=k^{\natural}$, therefore projective. Now by {\cite[3.9.1]{avramov:dgha}} $\partial^k=0$ implies $\HH(L)$ and $\operatorname{B}(L)$ are DG $k$-modules. Also we have $\HH(L)$ and $\operatorname{B}(L)$ are projective over $k$. Thus, by {\cite[3.9.7]{avramov:dgha}}, $L$ is semiprojective over $k$. 
\end{proof}

\begin{remark} Let $B'$ and $B''$ be $k$-complexes. By Lemma~\ref{lem141013a}, the complexes $B'$ and $B''$ are semiprojective over $k$. Thus $B'\otimes_k B''\simeq B'\lotimes_k B''$.
\end{remark}

\begin{fact}\label{lem140725a}
 Let $A$, $A'$, $B$, $B'$ be $k$-vector spaces, and let $\alpha:A\to B$ and $\alpha':A'\to B'$ be $k$-module homomorphisms. If $\alpha$ and $\alpha'$ are both isomorphisms, then $\alpha\otimes_k \alpha'$ is an isomorphism. If $A$, $A'$, $B$, $B'\neq 0$, then the converse holds.
\end{fact}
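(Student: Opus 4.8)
The plan is to treat the two implications separately, using only two facts special to vector spaces over a field: tensoring is exact (every $k$-vector space is free, hence flat), and the tensor product of two nonzero $k$-vector spaces is again nonzero.

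For the forward implication I would invoke functoriality of $\otimes_k$. If $\alpha$ and $\alpha'$ are isomorphisms with inverses $\beta$ and $\beta'$, then $\beta\otimes_k\beta'$ is a two-sided inverse of $\alpha\otimes_k\alpha'$, since $(\beta\otimes_k\beta')(\alpha\otimes_k\alpha')=(\beta\alpha)\otimes_k(\beta'\alpha')=\mathrm{id}_A\otimes_k\mathrm{id}_{A'}=\mathrm{id}_{A\otimes_k A'}$, and symmetrically on the other side. This requires no hypothesis that the spaces be nonzero.

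For the converse, assume $A,A',B,B'\neq 0$ and that $\alpha\otimes_k\alpha'$ is an isomorphism. I would first establish \emph{injectivity} of $\alpha$ by contraposition: if $0\neq a\in\ker\alpha$, choose any $0\neq a'\in A'$ (possible since $A'\neq 0$); then $a\otimes a'$ is a nonzero element of $A\otimes_k A'$ (a nonzero elementary tensor of nonzero vectors over a field) lying in $\ker(\alpha\otimes_k\alpha')$, contradicting injectivity. Symmetrically $\alpha'$ is injective. For \emph{surjectivity}, I would factor $\alpha$ as $A\twoheadrightarrow\im\alpha\hookrightarrow B$ and likewise $\alpha'$; exactness of $\otimes_k$ then identifies the image of $\alpha\otimes_k\alpha'$ with $\im\alpha\otimes_k\im\alpha'$ sitting inside $B\otimes_k B'$. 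Surjectivity of $\alpha\otimes_k\alpha'$ forces $\im\alpha\otimes_k\im\alpha'=B\otimes_k B'$. If $\im\alpha\subsetneq B$, then tensoring the exact sequence $0\to\im\alpha\to B\to B/\im\alpha\to 0$ with $B'$ stays exact and yields $(B/\im\alpha)\otimes_k B'\neq 0$ (both factors nonzero), so $\im\alpha\otimes_k B'\subsetneq B\otimes_k B'$; since $\im\alpha\otimes_k\im\alpha'\subseteq\im\alpha\otimes_k B'$, this contradicts $\im\alpha\otimes_k\im\alpha'=B\otimes_k B'$. Hence $\im\alpha=B$, and symmetrically $\im\alpha'=B'$. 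Combined with injectivity, $\alpha$ and $\alpha'$ are isomorphisms.

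The forward direction and the injectivity half of the converse are routine. The step I expect to require the most care is the surjectivity half: one must resist arguing by dimension count (the spaces may be infinite-dimensional) and instead lean on flatness over $k$ to identify $\im(\alpha\otimes_k\alpha')$ with $\im\alpha\otimes_k\im\alpha'$, together with the nonvanishing of the tensor product of nonzero spaces. These two inputs are precisely where the field hypothesis and the nonzero hypotheses on $A,A',B,B'$ are used; note in particular that the converse genuinely fails without the nonzero assumptions, since if, say, $A'=0=B'$ then $\alpha\otimes_k\alpha'$ is trivially an isomorphism of zero spaces while $\alpha$ may be arbitrary.
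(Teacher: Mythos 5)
Your proof is correct. There is, however, nothing in the paper to compare it against: the paper states this result as a \emph{Fact} with no proof whatsoever, treating it as standard linear algebra over a field. Taking the statement on its own terms, your argument is complete and the emphasis is in the right places. The forward direction is the usual functoriality computation and needs no hypotheses. For the converse, your two inputs---flatness of every $k$-vector space (so that $\im(\alpha\otimes_k\alpha')$ is identified with $\im\alpha\otimes_k\im\alpha'$ inside $B\otimes_k B'$) and the nonvanishing of a tensor product of nonzero vector spaces---are exactly what the field hypothesis supplies, and you are right to refuse a dimension count: for infinite-dimensional spaces, equality of dimensions of tensor products does not pin down the dimensions of the factors, so that shortcut genuinely fails. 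A marginally slicker route to the surjectivity half, avoiding exact sequences entirely, is to use that every subspace of a vector space is a direct summand: writing $B=\im\alpha\oplus C$ and $B'=\im\alpha'\oplus C'$ decomposes $B\otimes_k B'$ as the direct sum of $\im\alpha\otimes_k\im\alpha'$, $\im\alpha\otimes_k C'$, $C\otimes_k\im\alpha'$, and $C\otimes_k C'$; equality $\im\alpha\otimes_k\im\alpha'=B\otimes_k B'$ kills the summand $C\otimes_k B'$, whence $C=0$ since $B'\neq 0$, and symmetrically $C'=0$. This is the same mechanism as your flatness argument, just packaged through splittings. Your closing remark that the nonzero hypotheses are genuinely needed (take $A'=0=B'$) is also correct and worth keeping.
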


\begin{fact}\label{fact141014a} Let $X'$ and $X''$ be $k$-complexes. Then the K\"unneth formula {\cite[10.81]{rotman:iha}} implies that there is an isomorphism $\bigoplus_{p+q=i} \HH_p(X')\otimes_k \HH_q(X'')\xra{\cong} \HH_i(X'\otimes_k X'')$ given by $\overline{x'}\otimes_k \overline{x''}\mapsto \overline{x'\otimes_k x''}$. Moreover, if $\alpha':A'\to B'$ and $\alpha'':A''\to B''$ are chain maps over $k$, then $\bigoplus_{p+q=i}\HH_p(\alpha')\otimes_k \HH_q(\alpha'')$ is identified with $\HH_i(\alpha'\otimes_k \alpha'')$ under this isomorphism.
\end{fact}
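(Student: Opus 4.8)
The plan is to obtain the statement as a specialization of the general Künneth theorem~\cite[10.81]{rotman:iha} to the ground field $k$, followed by a direct naturality check. The point is that over a field the error term in Künneth vanishes, so the cross product is already an isomorphism, and naturality is then read off from the explicit formula for that product.

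First I would recall the general Künneth short exact sequence, which for $k$-complexes $X'$ and $X''$ reads
\[
0\to \bigoplus_{p+q=i}\HH_p(X')\otimes_k \HH_q(X'')\xra{\times} \HH_i(X'\otimes_k X'')\to \bigoplus_{p+q=i-1}\tor_1^k\bigl(\HH_p(X'),\HH_q(X'')\bigr)\to 0,
\]
where the left-hand map $\times$ is the homology cross product $\ol{x'}\otimes_k \ol{x''}\mapsto \ol{x'\otimes_k x''}$. (For $x'$ and $x''$ cycles, the Leibniz rule shows $x'\otimes_k x''$ is a cycle, so this assignment is well defined on homology classes.) Since $k$ is a field, every $k$-module is free, hence flat, so $\tor_1^k\bigl(\HH_p(X'),\HH_q(X'')\bigr)=0$ and the map $\times$ is the asserted isomorphism. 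I would take care only to confirm that Rotman's hypotheses hold automatically here; over a field there is no flatness or boundedness obstruction, so the specialization is immediate.

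For the moreover clause, I would argue from the formula for $\times$ that the square whose horizontal arrows are the two Künneth isomorphisms and whose vertical arrows are $\bigoplus_{p+q=i}\HH_p(\alpha')\otimes_k \HH_q(\alpha'')$ and $\HH_i(\alpha'\otimes_k \alpha'')$ commutes. Chasing a generator $\ol{x'}\otimes_k \ol{x''}$: one route sends it first to $\ol{x'\otimes_k x''}$ and then, via $\HH_i(\alpha'\otimes_k\alpha'')$, to $\ol{\alpha'(x')\otimes_k\alpha''(x'')}$; the other route sends it first to $\ol{\alpha'(x')}\otimes_k\ol{\alpha''(x'')}$ and then to $\ol{\alpha'(x')\otimes_k\alpha''(x'')}$. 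These agree, so the two maps are identified under the isomorphism.

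The main obstacle is not mathematical but bookkeeping: one must check that the indexing and Koszul sign conventions governing the differential on $X'\otimes_k X''$ used in this paper match those under which Rotman states the cross product, so that the formula $\ol{x'}\otimes_k\ol{x''}\mapsto\ol{x'\otimes_k x''}$ holds literally. Because $x'$ and $x''$ are cycles, any sign discrepancy affects neither the cycle condition nor the homology class of $x'\otimes_k x''$, so this reduces to verifying well-definedness, which is routine.
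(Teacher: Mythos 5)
Your proposal is correct and follows exactly the route the paper intends: the paper states this as a Fact justified solely by citing Rotman's K\"unneth theorem, and your argument---that over the field $k$ the $\tor_1^k$ term in the K\"unneth sequence vanishes, so the cross product $\ol{x'}\otimes_k\ol{x''}\mapsto\ol{x'\otimes_k x''}$ is an isomorphism, with naturality checked on generators (noting that chain maps have degree $0$, so no Koszul signs intervene)---is precisely the routine specialization that citation is standing in for. Nothing is missing.
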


\begin{lem}\label{lem140725b}
 Let $X'$, $X''$, $Y'$, $Y''$ be $k$-complexes, and let $\alpha':X'\to Y'$ and $\alpha'':X''\to Y''$ be chain maps over $k$. If $\alpha'$ and $\alpha''$ are isomorphisms, then $\alpha'\otimes_k \alpha'':X'\otimes_k X''\to Y'\otimes_k Y''$ is an isomorphism. If $X'$, $X''$, $Y'$, $Y''\neq 0$, then the converse holds.
  \end{lem}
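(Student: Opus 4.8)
The plan is to reduce everything to the vector-space statement Fact~\ref{lem140725a} by passing to homology through the K\"unneth isomorphism of Fact~\ref{fact141014a}. I read ``$\alpha'$ is an isomorphism'' as ``$\alpha'$ is an isomorphism in $D(k)$'', i.e. $\HH_p(\alpha')$ is bijective for every $p$, and correspondingly ``$X'\neq 0$'' as $\HH(X')\neq 0$. (If one instead intends a genuine chain isomorphism, the same argument works verbatim after replacing each $\HH_p(-)$ by the degree-$p$ component and K\"unneth by the evident degreewise splitting $(X'\otimes_k X'')_n=\bigoplus_{p+q=n}X'_p\otimes_k X''_q$.)

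For the forward implication, suppose $\alpha'$ and $\alpha''$ are isomorphisms, so every $\HH_p(\alpha')$ and every $\HH_q(\alpha'')$ is an isomorphism of $k$-vector spaces. Fact~\ref{lem140725a} then gives that each $\HH_p(\alpha')\otimes_k\HH_q(\alpha'')$ is an isomorphism, and Fact~\ref{fact141014a} identifies $\HH_i(\alpha'\otimes_k\alpha'')$ with $\bigoplus_{p+q=i}\HH_p(\alpha')\otimes_k\HH_q(\alpha'')$. A direct sum of isomorphisms is an isomorphism, so $\HH_i(\alpha'\otimes_k\alpha'')$ is bijective for all $i$ and $\alpha'\otimes_k\alpha''$ is an isomorphism. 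Note this direction needs no nonvanishing hypotheses, as required.

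For the converse, assume $\HH(X'),\HH(X''),\HH(Y'),\HH(Y'')$ are all nonzero and that $\alpha'\otimes_k\alpha''$ is an isomorphism. K\"unneth again identifies $\HH_i(\alpha'\otimes_k\alpha'')$ with $\bigoplus_{p+q=i}\HH_p(\alpha')\otimes_k\HH_q(\alpha'')$; since this decomposition is block diagonal, a direct sum of maps is an isomorphism exactly when each summand is, so I obtain that $\HH_p(\alpha')\otimes_k\HH_q(\alpha'')$ is an isomorphism for \emph{every} pair $(p,q)$. It remains to peel off a single tensor factor, and this is where the main obstacle lies: I cannot simply invoke the converse half of Fact~\ref{lem140725a} at a fixed degree $p$, because that requires $\HH_q(X'')$ and $\HH_q(Y'')$ to be simultaneously nonzero for a common $q$, whereas the homologies of $X''$ and of $Y''$ may be supported in different degrees.

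I would resolve this by a short case analysis at each fixed $p$. Choose $q_0$ with $\HH_{q_0}(X'')\neq 0$ and $q_1$ with $\HH_{q_1}(Y'')\neq 0$, both possible by hypothesis. If $\HH_p(X')=0$, then $\HH_p(X')\otimes_k\HH_{q_1}(X'')=0$, so the isomorphism $\HH_p(\alpha')\otimes_k\HH_{q_1}(\alpha'')$ forces $\HH_p(Y')\otimes_k\HH_{q_1}(Y'')=0$, whence $\HH_p(Y')=0$ and $\HH_p(\alpha')$ is the isomorphism $0\to 0$. If instead $\HH_p(X')\neq 0$, then $\HH_p(X')\otimes_k\HH_{q_0}(X'')\neq 0$, so the isomorphism $\HH_p(\alpha')\otimes_k\HH_{q_0}(\alpha'')$ forces its target $\HH_p(Y')\otimes_k\HH_{q_0}(Y'')$ to be nonzero; hence all four spaces $\HH_p(X'),\HH_{q_0}(X''),\HH_p(Y'),\HH_{q_0}(Y'')$ are nonzero and the converse half of Fact~\ref{lem140725a} yields that $\HH_p(\alpha')$ is an isomorphism. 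Thus $\HH_p(\alpha')$ is bijective for all $p$, so $\alpha'$ is an isomorphism; the symmetric argument, using $\HH(X'),\HH(Y')\neq 0$, shows $\alpha''$ is an isomorphism, completing the proof.
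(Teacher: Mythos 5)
Your proposal is correct and takes essentially the same approach as the paper: decompose $\alpha'\otimes_k\alpha''$ block-diagonally, then run the same case analysis at each fixed degree (one index $q_0$ where the source factor is nonzero, one index $q_1$ where the target factor is nonzero) to reduce to Fact~\ref{lem140725a}. The one caveat is that the paper reads ``isomorphism'' as a genuine chain isomorphism and so argues with the components $\alpha'_p\otimes_k\alpha''_q$ directly---your parenthetical degreewise variant---whereas your default derived-category reading is really the content of Lemma~\ref{lem140718d}, which the paper instead deduces from this lemma together with Fact~\ref{fact141014a}.
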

  
  \begin{proof} The forward implication is standard. For the reverse implication, assume $X'$, $X''$, $Y'$, $Y''\neq 0$ and $\alpha'\otimes_k \alpha''$ is an isomorphism. By definition we have $(\alpha'\otimes_k \alpha'')_i=\bigoplus_{p+q=i} (\alpha'_p\otimes_k \alpha''_q)$, so $\alpha'_p\otimes_k \alpha''_q$ is an isomorphism for all $p,q$.
 
It remains to show that $\alpha'_p$ and $\alpha''_q$ are isomorphisms for all $p,q$.
Let $X'_{p_0}\neq 0$, $X''_{q_0}\neq 0$, $Y'_{p_1}\neq 0$, and $Y''_{q_1}\neq 0$.
Suppose $X'_{p}=0$. Then $0=X'_{p}\otimes_k X''_{q_1}\xra[\cong]{\alpha'_{p}\otimes_k \alpha_{q_1}} Y'_{p}\otimes_k Y''_{q_1}$. Therefore, $Y'_{p}\otimes_k Y''_{q_1}=0$. Since $Y''_{q_1}\neq 0$, we have $Y'_{p}=0$. So $\alpha'_{p}$ is an isomorphism. By a similar argument $Y'_{p}=0$ implies $\alpha'_{p}$ is an isomorphism. 
Assume $X'_p, Y'_p\neq 0$. The assumption $X''_{q_0}\neq 0$ implies that $Y''_{q_0}\neq 0$. Therefore, $\alpha'_p\otimes_k \alpha''_{q_0}$ isomorphism such that $X'_p, Y'_p, X''_{q_0}, Y''_{q_0}\neq 0$. Thus Lemma~\ref{lem140725a} implies $\alpha'_p$ and $\alpha''_{q_0}$ are isomorphisms.

A symmetric argument shows that $\alpha''_q$ is an isomorphism for all $q$.
\end{proof}

\begin{lem}\label{lem140718d}
 Let $A'$, $B'$, $A''$, $B''$ be $k$-complexes, and let $\alpha':A'\to B'$ and $\alpha'':A''\to B''$ be chain maps over $k$. If $\alpha'$ and $\alpha''$ are quasiisomorphisms, then $\alpha'\otimes_k \alpha''$ is a quasiisomorphism. If $A'$, $B'$, $A''$, $B''\not\simeq 0$, then the converse holds.
\end{lem}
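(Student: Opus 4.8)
The plan is to push everything down to the homology level via the K\"unneth isomorphism of Fact~\ref{fact141014a} and then quote Lemma~\ref{lem140725b}, which already handles the corresponding statement for genuine chain isomorphisms. The crucial observation is that $\alpha'\otimes_k\alpha''$ is a quasiisomorphism precisely when $\HH_i(\alpha'\otimes_k\alpha'')$ is an isomorphism for every $i$, and Fact~\ref{fact141014a} identifies this map with $\bigoplus_{p+q=i}\HH_p(\alpha')\otimes_k\HH_q(\alpha'')$. So the entire lemma is really Lemma~\ref{lem140725b} read off the homology.

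First I would regard the graded homology modules $\HH(A')$, $\HH(B')$, $\HH(A'')$, $\HH(B'')$ as $k$-complexes equipped with the zero differential, and view the induced maps $\HH(\alpha')\colon \HH(A')\to\HH(B')$ and $\HH(\alpha'')\colon\HH(A'')\to\HH(B'')$ as chain maps between them. Under this reading the degree-$i$ component of the tensor product complex $\HH(\alpha')\otimes_k\HH(\alpha'')$ is exactly $\bigoplus_{p+q=i}\HH_p(\alpha')\otimes_k\HH_q(\alpha'')$. Hence, by Fact~\ref{fact141014a}, the map $\alpha'\otimes_k\alpha''$ is a quasiisomorphism if and only if the chain map $\HH(\alpha')\otimes_k\HH(\alpha'')$ is an isomorphism of $k$-complexes. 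This equivalence is the engine for both implications.

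For the forward direction, if $\alpha'$ and $\alpha''$ are quasiisomorphisms then $\HH(\alpha')$ and $\HH(\alpha'')$ are isomorphisms of complexes, so the forward half of Lemma~\ref{lem140725b} makes $\HH(\alpha')\otimes_k\HH(\alpha'')$ an isomorphism, and the displayed equivalence yields that $\alpha'\otimes_k\alpha''$ is a quasiisomorphism. For the converse, the hypotheses $A'$, $B'$, $A''$, $B''\not\simeq 0$ translate to $\HH(A')$, $\HH(B')$, $\HH(A'')$, $\HH(B'')\neq 0$, which are exactly the nonvanishing conditions required by Lemma~\ref{lem140725b}. Assuming $\alpha'\otimes_k\alpha''$ is a quasiisomorphism, the displayed equivalence gives that $\HH(\alpha')\otimes_k\HH(\alpha'')$ is an isomorphism, and the converse half of Lemma~\ref{lem140725b} then forces $\HH(\alpha')$ and $\HH(\alpha'')$ to be isomorphisms, i.e. $\alpha'$ and $\alpha''$ are quasiisomorphisms.

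I expect the only delicate point to be the bookkeeping in the translation step: verifying that the K\"unneth identification of Fact~\ref{fact141014a} genuinely lets me treat $\HH_i(\alpha'\otimes_k\alpha'')$ as the degree-$i$ piece of the tensor product of the two homology chain maps carrying the zero differential, so that Lemma~\ref{lem140725b}---which is stated for honest chain isomorphisms rather than quasiisomorphisms---applies verbatim to $\HH(\alpha')$ and $\HH(\alpha'')$. Once this reduction is in place, no further computation is needed.
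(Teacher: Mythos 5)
Your proposal is correct and follows essentially the same route as the paper, whose proof is simply a citation of Facts~\ref{lem140725a}--\ref{fact141014a} and Lemma~\ref{lem140725b}; you have merely spelled out the reduction (pass to homology with zero differentials via the K\"unneth identification, then invoke Lemma~\ref{lem140725b}) that the paper leaves implicit. The one point you flag as delicate --- that naturality of the K\"unneth isomorphism lets you identify $\HH_i(\alpha'\otimes_k\alpha'')$ with the degree-$i$ component of $\HH(\alpha')\otimes_k\HH(\alpha'')$ --- is exactly what Fact~\ref{fact141014a} asserts, so the argument is complete.
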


\begin{proof} This follows from Facts~\ref{lem140725a}-\ref{fact141014a} and Lemma~\ref{lem140725b}.
\end{proof}

\begin{lem}\label{lem140724b} Let $M'$ and $M''$ be $k$-complexes. If $M'$ and $M''$ are homologically bounded, then $M'\otimes_k M''$ is homologically bounded. If $M', M''\not\simeq 0$, then the converse holds.
\end{lem}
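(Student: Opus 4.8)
The plan is to reduce the whole statement to the Künneth isomorphism of Fact~\ref{fact141014a}, which over the field $k$ gives, for every $i$, an isomorphism of $k$-vector spaces
\[
\HH_i(M'\otimes_k M'')\cong \bigoplus_{p+q=i}\HH_p(M')\otimes_k \HH_q(M'').
\]
The key elementary facts that make this work are that, over a field, a direct sum of vector spaces vanishes if and only if each summand vanishes, and $V\otimes_k W=0$ if and only if $V=0$ or $W=0$. Thus $\HH_i(M'\otimes_k M'')=0$ precisely when for every decomposition $i=p+q$ at least one of $\HH_p(M')$ and $\HH_q(M'')$ is zero.

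For the forward direction I would simply track degrees. Suppose $\HH_p(M')=0$ unless $s'\leq p\leq t'$ and $\HH_q(M'')=0$ unless $s''\leq q\leq t''$. Then for $i>t'+t''$ or $i<s'+s''$, any decomposition $i=p+q$ forces $p$ or $q$ out of its range, so every summand in the Künneth formula vanishes; hence $\HH_i(M'\otimes_k M'')=0$ for $|i|\gg 0$, i.e.\ $M'\otimes_k M''\in D_b(k)$.

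For the converse I would argue contrapositively. Assuming $M',M''\not\simeq 0$, fix $q_0$ with $\HH_{q_0}(M'')\neq 0$. If $M'$ were not homologically bounded, then $\HH_p(M')\neq 0$ for infinitely many $p$; for each such $p$ the summand $\HH_p(M')\otimes_k \HH_{q_0}(M'')$ is nonzero, so $\HH_{p+q_0}(M'\otimes_k M'')\neq 0$. As $p$ ranges over infinitely many values, so does $p+q_0$, contradicting homological boundedness of $M'\otimes_k M''$. Hence boundedness of the tensor product forces $M'\in D_b(k)$, and a symmetric argument gives $M''\in D_b(k)$.

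The argument is almost entirely bookkeeping, so there is no serious obstacle; the only point requiring care is that we are working over a field, which is exactly what guarantees no homology class is lost through cancellation in the direct-sum-of-tensor-products formula. This is precisely where the hypotheses $M',M''\not\simeq 0$ and the field assumption are used.
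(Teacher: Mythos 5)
Your proposal is correct and follows essentially the same route as the paper: both directions rest on the K\"unneth isomorphism $\HH_i(M'\otimes_k M'')\cong\bigoplus_{p+q=i}\HH_p(M')\otimes_k\HH_q(M'')$, with the forward implication by degree bookkeeping on $\sup$ and $\inf$, and the converse by tensoring infinitely many nonvanishing homology modules of $M'$ against a fixed nonzero $\HH_{q_0}(M'')$ to contradict boundedness of the product.
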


\begin{proof} \noindent For the forward implication, set $t=\operatorname{sup}(M')$, $w=\operatorname{sup}(M'')$, $s=\operatorname{inf}(M')$, and $l=\operatorname{inf}(M'')$. 

Case 1: If $p+q>t+w$, then $\HH_p(M')\otimes_k \HH_q(M'')=0$ because $p+q>t+w$ implies $p>t$ or $q>w$. Therefore, by Fact~\ref{fact141014a},  $\HH_i(M'\otimes_k M'')=\bigoplus_{p+q=i}(\HH_p(M')\otimes_k \HH_q(M''))=0$ for $i>t+w$.

Case 2: If $p+q<s+l$, then $\HH_p(M')\otimes_k \HH_q(M'')=0$ because $p+q<s+l$ implies $p<s$ or $q<l$. Therefore, $\HH_i(M'\otimes_k M'')=0$ for $i<s+l$.

For the reverse implication suppose $\HH_{p_j}(M')\neq 0$ for infinitely many
indices $j$. Since $M''\not\simeq 0$ there exists an integer $b$ such that $\HH_b(M'')\neq 0$. Now, $\HH_{p_j}(M')\otimes_k \HH_b(M'')\neq 0$ for infinitely many indices $j$. However, $\HH_{p_j}(M')\otimes_k \HH_b(M'')\subset \HH_{p_j+b}(M'\otimes_k M'')$. Hence, there is an infinite number of $\sigma=p_j+b$ such that $\HH_\sigma(M'\otimes_k M'')\neq 0$ which is a contradiction since $M'\otimes_k M''$ is homologically bounded. Thus $\HH_{p_j}(M')\neq 0$ for only finitely many $j$. Hence $M'$ is homologically bounded. By a similar argument $M''$ is homologically bounded. 
\end{proof}

\section{DG Tensor Products} 

This section consists of tools for use in the proofs of our main theorems.

\begin{assumption} In this section $A'$ and $A''$ are DG $R$-algebras and $A:=A'\otimes_R A''$.
\end{assumption}

\begin{lem}\label{lem141028a} Assume that $R=k$ is a field. Let $M'$ and $M''$ be DG $A'$- and $A''$-modules respectively. If $M'$ and $M''$ are degreewise homologically finite over $A'$ and $A''$, respectively, then $M'\otimes_k M''$ is degreewise homologically finite over $A'\otimes_k A''$ under any of the following conditions:
 \begin{enumerate}
  \item[(1)] $M'$ is homologically bounded,
  \item[(2)] $M''$ is homologically bounded,
  \item[(3)] $M'$ and $M''$ are homologically bounded below, or
  \item[(4)] $M'$ and $M''$ are homologically bounded above.
 \end{enumerate}
\end{lem}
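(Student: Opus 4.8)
The plan is to compute $\HH_i(M' \otimes_k M'')$ with the K\"unneth formula and then verify that, under each of (1)--(4), it is a \emph{finite} direct sum of finitely generated $\HH_0(A)$-modules. The first step is to pin down the ground ring. Since a DG $R$-algebra is positively graded, we have $\HH_p(A') = 0 = \HH_q(A'')$ for $p,q<0$, so Fact~\ref{fact141014a} applied to $A = A' \otimes_k A''$ collapses in degree zero to the isomorphism $\HH_0(A) \cong \HH_0(A') \otimes_k \HH_0(A'')$. This is precisely the ring over which degreewise finiteness of $M' \otimes_k M''$ must be checked.

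Next I would apply Fact~\ref{fact141014a} to the modules themselves, obtaining $\HH_i(M' \otimes_k M'') \cong \bigoplus_{p+q=i} \HH_p(M') \otimes_k \HH_q(M'')$. Because this K\"unneth map is induced by $\overline{x'}\otimes_k \overline{x''} \mapsto \overline{x'\otimes_k x''}$ and the $A$-action on $M'\otimes_k M''$ is the componentwise one, the isomorphism is $\HH_0(A)$-linear. Now each $\HH_p(M')$ is finitely generated over $\HH_0(A')$ and each $\HH_q(M'')$ over $\HH_0(A'')$, so the pairwise tensors of generating sets show that each summand $\HH_p(M')\otimes_k \HH_q(M'')$ is finitely generated over $\HH_0(A')\otimes_k \HH_0(A'') = \HH_0(A)$. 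The lemma therefore reduces to the claim that, for each fixed $i$, only finitely many of these summands are nonzero.

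This is the only place the four hypotheses are used, and it amounts to index bookkeeping. Put $s=\operatorname{inf}(M')$, $t=\operatorname{sup}(M')$, $l=\operatorname{inf}(M'')$, $w=\operatorname{sup}(M'')$; a summand with $p+q=i$ vanishes unless $s\leq p\leq t$ and $l\leq q\leq w$. In case (1), boundedness of $M'$ makes $s,t$ finite, confining $p$ to a finite set, and case (2) is symmetric in $q$. In case (3), the inequalities $p\geq s$ and $q\geq l$ force $s\leq p = i-q\leq i-l$, a finite range; in case (4), $p\leq t$ and $q=i-p\leq w$ force $i-w\leq p\leq t$. In every case the index range is finite, so $\HH_i(M'\otimes_k M'')$ is a finite direct sum of finitely generated $\HH_0(A)$-modules, hence finitely generated, which is exactly degreewise finiteness. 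The ``main obstacle'' here is really just combinatorial: confirming that conditions (1)--(4) are precisely those guaranteeing that the diagonal $p+q=i$ meets the support rectangle $[s,t]\times[l,w]$ in finitely many lattice points. No homological input beyond the K\"unneth formula and the positive grading of the DG algebras is needed.
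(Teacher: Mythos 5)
Your proof is correct and follows essentially the same route as the paper's: apply the K\"unneth isomorphism $\HH_i(M'\otimes_k M'')\cong\bigoplus_{p+q=i}\HH_p(M')\otimes_k\HH_q(M'')$, observe each summand is finitely generated over $\HH_0(A')\otimes_k\HH_0(A'')\cong\HH_0(A)$, and check that each of the hypotheses (1)--(4) forces the index set on the diagonal $p+q=i$ to be finite. You are somewhat more explicit than the paper about the index bookkeeping in cases (2)--(4) and about identifying the ring $\HH_0(A)$, but there is no substantive difference.
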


\begin{proof}
\noindent (1) By Fact~\ref{fact141014a}, for all $i$ we have $\HH_i(M'\otimes_k M'')\cong \bigoplus_{p+q=i} \HH_p(M')\otimes_k \HH_q(M'')$. Note that this direct sum is finite because $M'\in D_b(A')$.
 
 \vspace{.1in}
 
\noindent Now, $\HH_p(M')$ is finitely generated over $\HH_0(A')$ for all $p$, and $\HH_q(M'')$ is finitely generated over $\HH_0(A'')$ for all $q$, by our assumption. Therefore,
 $\HH_p(M')\otimes_k \HH_q(M'')$ is finitely generated over $\HH_0(A')\otimes_k \HH_0(A'')$ for all $p$ and $q$. Hence $\bigoplus_{p+q=i} \HH_p(M')\otimes_k \HH_q(M'')$ is finitely generated for all $i$.
 
 \noindent The proofs of parts (2)--(4) are similar to proof of part (1). Notice that in each case the assumptions guarantee the direct sum $\bigoplus_{p+q=i} \HH_p(M')\otimes_k \HH_q(M'')$ is finite.
\end{proof}

The next result gives us some flexibility for understanding how DG $A'$- and $A''$-modules yield DG $A$-modules.

\begin{lem}\label{lem141026a} Let $X'$ and $X''$ be DG $A'$- and $A''$-modules respectively. The map $\alpha^{X'}_{X''}:X'\otimes_R X''\to (A\otimes_{A'} X')\otimes_A (A\otimes_{A''} X'')$ given by $x'\otimes x''\mapsto (1\otimes x')\otimes (1\otimes x'')$ is an isomorphism of DG $A$-modules.
\end{lem}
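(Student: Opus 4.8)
The plan is to show that $\alpha := \alpha^{X'}_{X''}$ is a morphism of DG $A$-modules and then to exhibit a two-sided inverse, most cleanly by factoring $\alpha^{-1}$ as a composite of the standard associativity, commutativity, and cancellation isomorphisms for DG tensor products.

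First I would check that $\alpha$ is a well-defined morphism of DG $A$-modules. Well-definedness amounts to verifying that $x'\otimes x''\mapsto (1\otimes x')\otimes(1\otimes x'')$ respects the $R$-balancing relations defining $X'\otimes_R X''$. For $A$-linearity one computes on a generator $(a'\otimes a'')(x'\otimes x'')$: using the multiplication rule on $A'\otimes_R A''$ one has $(a'\otimes a'')=(-1)^{|a'||a''|}(1\otimes a'')(a'\otimes 1)$, so that inside $A\otimes_{A'}X'$ one may move $a'\otimes 1$ across the balancing to get $(a'\otimes a'')\otimes x'=(-1)^{|a'||a''|}(1\otimes a'')\otimes a'x'$, and symmetrically in $A\otimes_{A''}X''$; passing the remaining $A$-factors across $\otimes_A$ then matches the $A$-action on $X'\otimes_R X''$. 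That $\alpha$ commutes with the differentials follows from the Leibniz rule together with the same sign conventions.

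For bijectivity I would build $\alpha^{-1}$ from canonical isomorphisms. Converting between left and right structures via graded commutativity, the cancellation isomorphism $(X'\otimes_{A'}A)\otimes_A N\cong X'\otimes_{A'}N$, with $N=A\otimes_{A''}X''$ regarded as a DG $A'$-module along $A'\to A$, $a'\mapsto a'\otimes 1$, gives $(A\otimes_{A'}X')\otimes_A(A\otimes_{A''}X'')\cong X'\otimes_{A'}(A\otimes_{A''}X'')$. A second cancellation $A\otimes_{A''}X''=(A'\otimes_R A'')\otimes_{A''}X''\cong A'\otimes_R X''$, followed by $X'\otimes_{A'}(A'\otimes_R X'')\cong(X'\otimes_{A'}A')\otimes_R X''\cong X'\otimes_R X''$, produces a chain of DG isomorphisms whose composite sends $(1\otimes x')\otimes(1\otimes x'')$ back to $x'\otimes x''$. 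Hence this composite is inverse to $\alpha$, so $\alpha$ is an isomorphism. Alternatively, one can produce the inverse by hand: every element of $A\otimes_{A'}X'$ is an $R$-combination of terms $(1\otimes a'')\otimes y'$, and symmetrically for $A\otimes_{A''}X''$, so defining the inverse on these reduced representatives and moving the surviving $A$-factors across $\otimes_A$ recovers $X'\otimes_R X''$.

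The main obstacle I expect is the sign bookkeeping. Each time an algebra element is moved across a balancing over $A'$, $A''$, or $A$, or a left structure is converted to a right one, a Koszul sign $(-1)^{|\cdot||\cdot|}$ is introduced, and one must confirm these are precisely the signs built into the multiplication on $A'\otimes_R A''$ so that $\alpha$ is genuinely $A$-linear and the hand-built inverse is independent of the chosen representatives. Once the conventions are pinned down consistently, each individual verification is routine.
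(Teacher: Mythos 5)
Your proposal is correct and matches the paper's argument in essence: the paper likewise realizes $\alpha^{X'}_{X''}$ as a composite of the canonical cancellation and associativity isomorphisms (written in the forward direction, $X'\otimes_R X''\cong (X'\otimes_{A'}(A'\otimes_R A''))\otimes_{A''}X''\cong\cdots\cong (A\otimes_{A'}X')\otimes_A(A\otimes_{A''}X'')$, rather than as an inverse), and declares the $A$-linearity check straightforward. Your extra attention to the Koszul signs is consistent with, and slightly more explicit than, what the paper records.
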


\begin{proof} The given map is the composition of the following sequence of isomorphisms.
\begin{align*} X'\otimes_R X'' &\cong (X'\otimes_{A'} (A'\otimes_R A''))\otimes_{A''} X''\\
&\cong ((A'\otimes_R A'')\otimes_{A'} X')\otimes_{A''} X''\\
&\cong (A\otimes_{A'} X')\otimes_{A''} X''\\
&\cong (A\otimes_{A'} X')\otimes_A (A\otimes_{A''} X'')
\end{align*}

\noindent It is straightforward to show that $\alpha$ is $A$-linear.
\end{proof}

\begin{lem}\label{lem141031a} If $P'$ is a semiprojective DG $A'$-module and $P''$ is semiprojective DG $A''$-module, then $P'\otimes_R P''$ is semiprojective over $A$.
\end{lem}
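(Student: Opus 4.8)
The plan is to reduce the statement to two stability properties of semiprojectivity---its behavior under base change along the structure morphisms into $A$, and its behavior under tensor product over $A$---and then to glue these together using the isomorphism of Lemma~\ref{lem141026a}. Recall that the structure morphisms $A'\to A$, $a'\mapsto a'\otimes 1$, and $A''\to A$, $a''\mapsto 1\otimes a''$, are morphisms of DG $R$-algebras, so we may extend and restrict scalars along them.

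First I would record that base change preserves semiprojectivity: if $P'$ is semiprojective over $A'$, then $A\otimes_{A'}P'$ is semiprojective over $A$ (and symmetrically for $P''$). This rests on the adjunction isomorphism $\hom_A(A\otimes_{A'}P',N)\cong \hom_{A'}(P',N)$, natural in the DG $A$-module $N$, where on the right $N$ is regarded as a DG $A'$-module by restriction along $A'\to A$. The key point is that restriction of scalars does not alter the underlying $R$-complex, so a surjective quasiisomorphism $N\to N'$ of DG $A$-modules remains a surjective quasiisomorphism after restriction to $A'$. Since $P'$ is semiprojective over $A'$, applying $\hom_{A'}(P',-)$ produces a surjective quasiisomorphism; transporting this across the natural adjunction shows that $\hom_A(A\otimes_{A'}P',-)$ respects surjective quasiisomorphisms, which is exactly semiprojectivity of $A\otimes_{A'}P'$ over $A$.

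Next I would show that the tensor product over $A$ of two semiprojective DG $A$-modules $Q'$ and $Q''$ is again semiprojective. Here I would use the tensor-hom adjunction $\hom_A(Q'\otimes_A Q'',N)\cong \hom_A(Q',\hom_A(Q'',N))$, natural in $N$, which is available because $A$ is graded commutative so that $\hom_A(Q'',N)$ is itself a DG $A$-module. Given a surjective quasiisomorphism $N\to N'$, semiprojectivity of $Q''$ makes $\hom_A(Q'',N)\to\hom_A(Q'',N')$ a surjective quasiisomorphism, and then semiprojectivity of $Q'$ makes $\hom_A(Q',\hom_A(Q'',N))\to\hom_A(Q',\hom_A(Q'',N'))$ a surjective quasiisomorphism. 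By the adjunction this is the comparison map $\hom_A(Q'\otimes_A Q'',N)\to\hom_A(Q'\otimes_A Q'',N')$, so $Q'\otimes_A Q''$ is semiprojective over $A$.

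Finally I would assemble the pieces. By Lemma~\ref{lem141026a} there is an isomorphism of DG $A$-modules $P'\otimes_R P''\cong (A\otimes_{A'}P')\otimes_A(A\otimes_{A''}P'')$. The two factors on the right are semiprojective over $A$ by the base-change step, their tensor product over $A$ is semiprojective by the tensor step, and semiprojectivity is invariant under isomorphism in the category of DG $A$-modules; hence $P'\otimes_R P''$ is semiprojective over $A$. The only delicate points are verifying that the two adjunction isomorphisms are genuinely natural isomorphisms of $\hom$-complexes---so that they identify the specific comparison maps being tested against surjective quasiisomorphisms---and keeping track of the graded-commutativity signs. I expect the naturality of the tensor-hom adjunction over a graded commutative DG algebra to be the main thing to pin down carefully; everything else is a formal consequence of the two adjunctions and the definition of semiprojectivity.
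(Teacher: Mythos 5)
Your proposal is correct and follows essentially the same route as the paper's proof: both reduce via the isomorphism $P'\otimes_R P''\cong (A\otimes_{A'}P')\otimes_A(A\otimes_{A''}P'')$ from Lemma~\ref{lem141026a}, establish that base change along $A'\to A$ preserves semiprojectivity via the adjunction $\hom_A(A\otimes_{A'}P',-)\cong\hom_{A'}(P',-)$, and show that a tensor product over $A$ of semiprojectives is semiprojective via tensor-hom adjunction. Your write-up simply spells out the naturality and restriction-of-scalars details that the paper leaves implicit.
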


\begin{proof} By Lemma~\ref{lem141026a}, we have $P'\otimes_R P''\cong (A \otimes_{A'} P')\otimes_A (A\otimes_{A''} P'')$ as DG $A$-modules.
 
 The fact that $P'$ is semiprojective over $A'$ implies that $A\otimes_{A'} P'$ is semiprojective over $A$ because $$\hom_A(A\otimes_{A'} P',-)\cong \hom_{A'}(P',\hom_A(A,-))\cong \hom_{A'}(P',-).$$ Similarly, $A\otimes_{A''} P''$ is semiprojective over $A$. 
 Now $X,Y$ semiprojective over $A$ implies $X\otimes_A Y$ is semiprojective over $A$ because $\hom_A(X\otimes_A Y,-)\cong \hom_A(Y,\hom_A(X,-))$. Therefore,
 $A\otimes_{A'} P'$ semiprojective over $A$ and $A\otimes_{A''} P''$ semiprojective over $A$ imply that $(A\otimes_{A'} P')\otimes_A (A\otimes_{A''} P'')\cong P'\otimes_R P''$ is semiprojective. 
\end{proof}

\begin{lem}\label{lem140718a} Assume that $R=k$ is a field. Let $M'$ and $M''$ be DG $A'$- and $A''$-modules respectively.
If $P'\xra[\simeq]{\alpha'} M'$ and $P''\xra[\simeq]{\alpha''} M''$ are semiprojective resolutions over $A'$ and $A''$, respectively, then
 $P'\otimes_k P''\xra[\simeq]{\alpha'\otimes_k \alpha''}M'\otimes_k M''$ is a semiprojective resolution over $A$. 
\end{lem}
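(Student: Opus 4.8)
The plan is to verify the two defining conditions of a semiprojective resolution separately, since the statement $P'\otimes_k P''\xra{\simeq} M'\otimes_k M''$ asserts both that the source is semiprojective over $A$ and that the map is a quasiisomorphism. Happily, each condition has already been isolated in an earlier lemma, so the proof should amount to assembling those two inputs together with the observation that $\alpha'\otimes_k\alpha''$ is genuinely a morphism of DG $A$-modules.

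First I would dispatch semiprojectivity. Because $\alpha'$ and $\alpha''$ are semiprojective resolutions, their sources $P'$ and $P''$ are semiprojective over $A'$ and $A''$ respectively. Applying Lemma~\ref{lem141031a} in the case $R=k$ then gives immediately that $P'\otimes_k P''$ is semiprojective over $A=A'\otimes_k A''$. This step is essentially a citation.

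Next I would handle the quasiisomorphism condition by passing to underlying $k$-complexes. The maps $\alpha'$ and $\alpha''$ are in particular chain maps of $k$-complexes, and by hypothesis they are quasiisomorphisms; hence Lemma~\ref{lem140718d} shows that $\alpha'\otimes_k\alpha''$ is a quasiisomorphism of $k$-complexes. Since being a quasiisomorphism is a purely homological condition that does not see the $A$-module structure, and since $\alpha'\otimes_k\alpha''$ is $A$-linear by the construction of the tensor-product DG $A$-module structure (the Fact in Section~2), it follows that $\alpha'\otimes_k\alpha''$ is a quasiisomorphism of DG $A$-modules. Combining the two steps yields that $P'\otimes_k P''\xra{\simeq} M'\otimes_k M''$ is a semiprojective resolution over $A$.

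I do not expect a serious obstacle here, as the technical content is fully absorbed into Lemmas~\ref{lem141031a} and \ref{lem140718d}. The only point requiring a moment of care is the transition from ``quasiisomorphism of $k$-complexes'' to ``quasiisomorphism of DG $A$-modules'': one must note that the $A$-action on $P'\otimes_k P''$ and on $M'\otimes_k M''$ is compatible with $\alpha'\otimes_k\alpha''$ so that the map lives in $D(A)$, while the quasiisomorphism property is detected on homology and is therefore inherited verbatim from the underlying $k$-complex computation.
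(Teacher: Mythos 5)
Your proposal is correct and follows exactly the paper's own argument: the paper's proof is a one-line citation of Lemma~\ref{lem141031a} for semiprojectivity of $P'\otimes_k P''$ and Lemma~\ref{lem140718d} for the quasiisomorphism. Your extra remark about $A$-linearity of $\alpha'\otimes_k\alpha''$ is a reasonable point of care that the paper leaves implicit.
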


\begin{proof}  Notice $P'\otimes_k P''$ is semiprojective over $A$ and $P'\otimes_k P''\xra{\alpha'\otimes_k \alpha''}M'\otimes_k M''$ is a quasiisomorphism by Lemmas~\ref{lem141031a} and \ref{lem140718d}.
\end{proof}

Our next result is similar in flavor to Lemma~\ref{lem141026}.

\begin{lem}\label{lem140929a} Let $X'$ and $X''$ be $A'$- and $A''$-modules, respectively. The map $$\tilde{\gamma}^{X',X''}_{Y',Y''}:(X'\otimes_{A'}Y')\otimes_R (X''\otimes_{A''} Y'')\to (X'\otimes_R X'')\otimes_A(Y'\otimes_R Y'')$$ given by $(x'\otimes y')\otimes (x''\otimes y'')\mapsto (-1)^{|y'||x''|}(x'\otimes x'')\otimes (y'\otimes y'')$ is an isomorphism of DG $A$-modules. 
\end{lem}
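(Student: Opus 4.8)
The plan is to prove this exactly as in Lemma~\ref{lem141026a}, treating $\tilde{\gamma}:=\tilde{\gamma}^{X',X''}_{Y',Y''}$ as a concrete isomorphism by writing down its inverse explicitly rather than arguing bijectivity abstractly. I would define
\[
\tilde{\delta}\colon (X'\otimes_R X'')\otimes_A(Y'\otimes_R Y'')\to (X'\otimes_{A'}Y')\otimes_R(X''\otimes_{A''}Y'')
\]
on generators by $(x'\otimes x'')\otimes(y'\otimes y'')\mapsto (-1)^{|x''||y'|}(x'\otimes y')\otimes(x''\otimes y'')$, i.e. the same recipe swapping the two inner factors back. Since the two Koszul signs multiply to $(-1)^{2|x''||y'|}=1$, the composites $\tilde{\gamma}\circ\tilde{\delta}$ and $\tilde{\delta}\circ\tilde{\gamma}$ are visibly the identity on generators. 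Hence once $\tilde{\gamma}$ and $\tilde{\delta}$ are each shown to be well-defined $A$-linear chain maps, they are mutually inverse isomorphisms of DG $A$-modules.

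The substance of the argument, and the step I expect to be the main obstacle, is well-definedness together with the bookkeeping of Koszul signs. The defining assignment of $\tilde{\gamma}$ is manifestly $R$-multilinear in the four slots $x',y',x'',y''$; what must be verified is that it is compatible with the $A'$-balancing $x'a'\otimes y'\sim x'\otimes a'y'$ in the first inner factor and the $A''$-balancing in the second, where equality in the target is read modulo the relation defining $\otimes_A$. For the $A'$-balancing one compares the images of $(x'a'\otimes y')\otimes(x''\otimes y'')$ and $(x'\otimes a'y')\otimes(x''\otimes y'')$: moving $a'$ past the tensor symbols uses the graded commutativity of $A'$ together with the explicit action of $a'\otimes 1\in A$ on the $A$-modules $X'\otimes_R X''$ and $Y'\otimes_R Y''$, and once the signs are collected both images equal $(-1)^{(|a'|+|y'|)|x''|}(x'\otimes x'')\otimes(a'y'\otimes y'')$. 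This is exactly where the sign $(-1)^{|y'||x''|}$ in the definition is forced, and the analogous check for the $A''$-balancing via $1\otimes a''$ is symmetric. The crux is thus that a single Koszul sign must simultaneously reconcile the $A'$-balancing, the $A''$-balancing, and the $\otimes_A$-relation on the target; I would arrange the write-up so that one sign computation settles all three.

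With well-definedness in hand, the remaining checks are routine. That $\tilde{\gamma}$ is a chain map follows from the Leibniz rule on the various tensor products, the point again being that the Koszul signs produced by differentiating a product match the sign $(-1)^{|y'||x''|}$; $A$-linearity need only be tested on the algebra generators $a'\otimes 1$ and $1\otimes a''$ of $A=A'\otimes_R A''$, where it reduces to the same manipulations as the balancing checks. The identical verifications apply to $\tilde{\delta}$, since it is the same formula with $X',Y'$ and $X'',Y''$ interchanged, after which the composite computation from the first paragraph finishes the proof. Alternatively, one can avoid writing $\tilde{\delta}$ by exhibiting $\tilde{\gamma}$ as a composite of the standard associativity and graded-commutativity isomorphisms of tensor products over $R$ and over $A$, exactly as in the chain of canonical isomorphisms used for Lemma~\ref{lem141026a}; tracking a generator through that chain reproduces the stated formula and its sign.
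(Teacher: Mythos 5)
Your proposal is correct, but your primary route differs from the paper's. The paper does not verify well-definedness by hand: it realizes $\tilde{\gamma}^{X',X''}_{Y',Y''}$ (read from the target side) as the composite of four canonical isomorphisms --- the base-change isomorphism of Lemma~\ref{lem141026a} applied to $X'\otimes_R X''$ and to $Y'\otimes_R Y''$, followed by associativity and graded commutativity of $\otimes_A$, followed by the inverse base-change isomorphism --- and then asserts that chasing a generator through this chain yields the stated formula and that the map is $A$-linear. That approach gets well-definedness, the chain-map property, and bijectivity for free from the constituent isomorphisms; it is essentially the alternative you mention in your final sentence. Your main plan --- writing down $\tilde{\gamma}$ and its inverse $\tilde{\delta}$ on generators and checking the balancing relations, the Leibniz rule, and $A$-linearity directly --- is a legitimate and more self-contained substitute, and your sign analysis is right: compatibility with the $A'$-balancing forces exactly the sign $(-1)^{|y'||x''|}$, and both images of the balanced pair reduce to $(-1)^{(|a'|+|y'|)|x''|}(x'\otimes x'')\otimes(a'y'\otimes y'')$ once $a'\otimes 1$ is moved across $\otimes_A$ using graded commutativity of $A$. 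The trade-off is that your route carries the full Koszul-sign bookkeeping that the paper's factorization is designed to bypass, whereas the paper's route leaves as ``straightforward'' the (still nontrivial) verification that the composite of canonical maps is literally the map given by the displayed formula.
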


\begin{proof} Lemma~\ref{lem141026a} gives the first and last isomorphisms in the following display. The second and third isomorphisms are by associativity, commutativity, etc. of tensor products.
\begin{align*} (X'\otimes_R X'')\otimes_A (Y'\otimes_R Y'') & \cong [(A\otimes_{A'} X')\otimes_A (A\otimes_{A''} X'')]\otimes_A [(A\otimes_{A'} Y')\otimes_A (A\otimes_{A''} Y'')]\\
&\cong (A\otimes_{A'} X')\otimes_A (A\otimes_{A'} Y')\otimes_A (A\otimes_{A''} X'')\otimes_A (A\otimes_{A''} Y'')\\
&\cong (A\otimes_{A'}(X'\otimes_{A'} Y'))\otimes_A (A\otimes_{A''}(X''\otimes_{A''}Y''))\\
&\cong (X'\otimes_{A'} Y')\otimes_R (X''\otimes_{A''} Y'')
\end{align*}

\noindent It is straightforward to show that $\tilde{\gamma}^{X',X''}_{Y',Y''}$ is the composition of the displayed isomorphisms and is $A$-linear.
\end{proof}

\begin{lem}\label{lem141028c} Assume that $R=k$ is a field. Then the morphism $$\gamma^{X',X''}_{Y',Y''}:(X'\lotimes_{A'} Y')\otimes_k (X''\lotimes_{A''} Y'') \to (X'\otimes_k X'')\lotimes_A (Y'\otimes_k Y'')$$ induced by the morphism $\tilde{\gamma}^{P',P''}_{Q',Q''}$ from Lemma~\ref{lem140929a} is an isomorphism in $D(A)$.
\end{lem}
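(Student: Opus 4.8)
The plan is to replace every derived tensor product by an honest tensor product of semiprojective resolutions, at which point the morphism $\gamma^{X',X''}_{Y',Y''}$ becomes literally the chain-level isomorphism $\tilde{\gamma}^{P',P''}_{Q',Q''}$ of Lemma~\ref{lem140929a}. To begin, I would fix semiprojective resolutions $P'\res X'$ and $Q'\res Y'$ over $A'$, together with $P''\res X''$ and $Q''\res Y''$ over $A''$.

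First I would model the source of $\gamma$. Since $P'$ is semiprojective and $Q'\res Y'$ is a quasiisomorphism, the induced map $P'\otimes_{A'}Q'\to P'\otimes_{A'}Y'$ is a quasiisomorphism, and $P'\otimes_{A'}Y'$ computes $X'\lotimes_{A'}Y'$; hence $P'\otimes_{A'}Q'\res X'\lotimes_{A'}Y'$, and likewise $P''\otimes_{A''}Q''\res X''\lotimes_{A''}Y''$. Tensoring these two quasiisomorphisms over $k$ and invoking Lemma~\ref{lem140718d} yields
\[
(P'\otimes_{A'}Q')\otimes_k(P''\otimes_{A''}Q'')\res (X'\lotimes_{A'}Y')\otimes_k(X''\lotimes_{A''}Y''),
\]
so the source of $\gamma$ is computed by the left-hand complex. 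For the target, Lemma~\ref{lem140718a} shows that $P'\otimes_k P''$ is a semiprojective resolution of $X'\otimes_k X''$ over $A$ and that $Q'\otimes_k Q''$ is a semiprojective resolution of $Y'\otimes_k Y''$ over $A$; consequently $(P'\otimes_k P'')\otimes_A(Q'\otimes_k Q'')$ computes $(X'\otimes_k X'')\lotimes_A(Y'\otimes_k Y'')$.

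Under these two identifications, $\gamma^{X',X''}_{Y',Y''}$ is by construction the morphism induced by
\[
\tilde{\gamma}^{P',P''}_{Q',Q''}:(P'\otimes_{A'}Q')\otimes_k(P''\otimes_{A''}Q'')\to (P'\otimes_k P'')\otimes_A(Q'\otimes_k Q'').
\]
By Lemma~\ref{lem140929a} (applied with $R=k$ and with $P',P'',Q',Q''$ in the roles of $X',X'',Y',Y''$) this is an isomorphism of DG $A$-modules, hence in particular a quasiisomorphism and an isomorphism in $D(A)$. Therefore $\gamma^{X',X''}_{Y',Y''}$ is an isomorphism in $D(A)$.

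The main thing to get right is not a deep obstacle but the bookkeeping that makes $\gamma$ genuinely agree with $\tilde{\gamma}^{P',P''}_{Q',Q''}$ under the chosen resolutions: one must check that the quasiisomorphisms used to compute the four derived tensor products assemble into a commuting square with $\tilde{\gamma}$, which follows from the naturality of $\tilde{\gamma}$ in all four variables established in Lemma~\ref{lem140929a}. Since $\tilde{\gamma}$ is an honest chain-level isomorphism, no further homological input (spectral sequences or boundedness hypotheses) is required.
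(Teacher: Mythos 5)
Your proposal is correct and follows essentially the same route as the paper: choose semiprojective resolutions $P',P'',Q',Q''$, observe that $\tilde{\gamma}^{P',P''}_{Q',Q''}$ is a chain-level isomorphism by Lemma~\ref{lem140929a}, and conclude that $\gamma^{X',X''}_{Y',Y''}$ is an isomorphism in $D(A)$. The additional bookkeeping you supply (identifying the source and target via Lemmas~\ref{lem140718d} and~\ref{lem140718a}) is exactly what the paper leaves implicit.
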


\begin{proof} Let $P'\xra{\simeq} X'$, $P''\xra{\simeq} X''$, $Q'\xra{\simeq} Y'$, and $Q''\xra{\simeq} Y''$ be semiprojective resolutions over $A'$ and $A''$ as appropriate. By Lemma~\ref{lem140929a}, the map $$\tilde{\gamma}^{P',P''}_{Q',Q''}:(P'\otimes_{A'} Q')\otimes_k (P''\otimes_{A''} Q'')\to (P'\otimes_k P'')\otimes_A (Q'\otimes_k Q'')$$ is an isomorphism of DG $A$-modules. Therefore, $\gamma^{X',X''}_{Y',Y''}$ is an isomorphism in $D(A)$.
\end{proof}

The remainder of this section is devoted to understanding $\rhom_A(N,M)$ for DG $A$-modules $M$ and $N$ constructed as above.

\begin{defn}\label{lem140924b} Let $N',M'$ and $N'',M''$ be DG $A'$- and $A''$-modules, respectively. Consider elements $f'\in \hom_{A'}(N',M')$ and $f''\in \hom_{A''}(N'',M'')$. 
 Let $f'\boxtimes f'': N'\otimes_R N''\to M'\otimes_R M''$ be given by $(f'\boxtimes f'')_{|x'\otimes x''|}(x'\otimes x'')=(-1)^{|f''||x'|}f'_{|x'|}(x')\otimes f''_{|x''|}(x'')$. 
\end{defn}

\begin{remark}\label{remark141031b} With notation as in Definition~\ref{lem140924b}, the map $f'\boxtimes f''$ is well-defined and $A$-linear.
\end{remark}

\begin{ex}\label{ex141026b} Let $X'$ and $X''$ be $R$-complexes. Then we have $\partial^{X'\otimes_R X''}=(\partial^{X'}\boxtimes \text{id})+(\text{id}\boxtimes \partial^{X''})$.
\end{ex}

\begin{defn}\label{defn141031c}  Let $N',M'$ and $N'',M''$ be DG $A'$- and $A''$-modules, respectively. Let  $$\tilde{\eta}^{N',N''}_{M',M''}:\hom_{A'}(N',M')\otimes_R \hom_{A''}(N'',M'')\to \hom_A(N'\otimes_R N'',M'\otimes_R M'')$$ be given by $f'\otimes f''\mapsto f'\boxtimes f''$.
\end{defn}

\begin{remark}\label{lem140924a} The map $\tilde{\eta}^{N',N''}_{M',M''}$ is a well-defined morphism of DG $A$-modules.
\end{remark}

\begin{prop}\label{lem140924c} Assume that $R=k$ is a field. If $N',N''$ are degreewise finite, semifree, bounded below DG $A'$- and $A''$-modules, respectively, and $M',M''$ are bounded above DG $A'$- and $A''$-modules, respectively, then the morphism $\tilde{\eta}^{N',N''}_{M',M''}$ is an isomorphism of DG $A$-modules.
\end{prop}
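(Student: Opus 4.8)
The plan is to verify that the morphism $\tilde{\eta}^{N',N''}_{M',M''}$, which is already a morphism of DG $A$-modules by Remark~\ref{lem140924a}, is a bijection in each homological degree. Since the map is fixed on the nose, it suffices to prove it is an isomorphism of the underlying graded $A^{\natural}$-modules, and the only genuine content is a finiteness argument that converts the products produced by $\hom$-functors into finite direct sums.

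First I would fix degreewise finite semibases $E'$ and $E''$ for $N'$ and $N''$, so that $(N')^{\natural}=\bigoplus_{e'\in E'}A'^{\natural}e'$ and $(N'')^{\natural}=\bigoplus_{e''\in E''}A''^{\natural}e''$ are free. Since $R=k$ is a field, $A=A'\otimes_k A''$ and the tensor product of free graded modules over $k$ is again free, so $\{e'\otimes e''\}$ is a basis for $(N'\otimes_k N'')^{\natural}$; by Fact~\ref{fact141013c}(d) this exhibits $N'\otimes_k N''$ as semifree over $A$. I would then dispose of the base case $N'=\Sigma^a A'$, $N''=\Sigma^b A''$: here the identifications $\hom_{A'}(\Sigma^a A',M')\cong \Sigma^{-a}M'$, $\hom_{A''}(\Sigma^b A'',M'')\cong \Sigma^{-b}M''$, and $\hom_A(\Sigma^{a+b}A,M'\otimes_k M'')\cong \Sigma^{-a-b}(M'\otimes_k M'')$ identify both the source and the target of $\tilde{\eta}^{N',N''}_{M',M''}$ with $\Sigma^{-a-b}(M'\otimes_k M'')$, and the formula for $\boxtimes$ in Definition~\ref{lem140924b} shows that $\tilde{\eta}$ realizes the identity up to the Koszul sign. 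Hence $\tilde{\eta}$ is an isomorphism in the base case.

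For the general case I would compute both sides in a fixed degree $n$. Because $\hom$ carries the direct-sum decomposition of $(N')^{\natural}$ into a product, one has $\hom_{A'}(N',M')_n\cong \prod_{e'\in E'}(M')_{|e'|+n}$, and similarly for the other two $\hom$-modules. The crucial point is that this product is \emph{finite}: as $N'$ is bounded below and $M'$ is bounded above, $(M')_{|e'|+n}=0$ unless $\inf N'\leq |e'|\leq \sup M'-n$, and degreewise finiteness of $E'$ leaves only finitely many such $e'$. Thus in each degree every $\hom$-module appearing is a finite direct sum of shifted components of $M'$, $M''$, or $M'\otimes_k M''$. Since $\otimes_k$ commutes with finite direct sums, the source $\hom_{A'}(N',M')\otimes_k \hom_{A''}(N'',M'')$ and the target $\hom_A(N'\otimes_k N'',M'\otimes_k M'')$ decompose, in each degree, as the same finite direct sum indexed by pairs $(e',e'')$, and under these decompositions $\tilde{\eta}$ is the direct sum of the base-case isomorphisms. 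Therefore $\tilde{\eta}^{N',N''}_{M',M''}$ is bijective in every degree, hence an isomorphism of DG $A$-modules.

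The main obstacle is precisely the interchange of $\hom$ and $\otimes_k$: a $\hom$ out of an infinitely generated free module is an infinite product, and $\otimes_k$ does not commute with infinite products. The hypotheses---$N',N''$ semifree, degreewise finite, and bounded below, together with $M',M''$ bounded above---are exactly what is needed to collapse these products to finite direct sums in each degree, and this is where the real work lies. The remaining ingredients, namely the base case and the bookkeeping of the Koszul signs hidden in $\boxtimes$, are routine.
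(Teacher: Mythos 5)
Your proof is correct and takes essentially the same route as the paper's: reduce to the underlying graded $A^{\natural}$-modules, verify the rank-one case $\hom_{A'}(A',M')\otimes_k\hom_{A''}(A'',M'')\cong\hom_A(A,M'\otimes_k M'')$, and use the hypotheses (semibasis degreewise finite and bounded below, $M'$, $M''$ bounded above) to collapse the $\hom$-products into finite direct sums in each degree so that $\otimes_k$ commutes with them. The only cosmetic difference is that you index the decomposition by semibasis elements $(e',e'')$ where the paper groups them by degree into summands $\Sigma^p(A')^{\beta'_p}$.
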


\begin{proof} It suffices to show that the morphism $$\tilde{\eta}^{N',N''}_{M',M''}:\hom_{A'^{\natural}}(N'^{\natural},M'^{\natural})\otimes_k \hom_{A''^{\natural}}(N''^{\natural},M''^{\natural})\to \hom_{A^{\natural}}((N'\otimes_k N'')^{\natural}, (M'\otimes_k M'')^{\natural})$$ is an isomorphism. Therefore, without loss of generality, assume that all differentials are $0$. Thus $N'\cong \bigoplus_{p\geq p_0} \Sigma^p (A')^{\beta'_p}$ and $N''\cong \bigoplus_{q\geq q_0} \Sigma^q (A'')^{\beta''_q}$ for some integers $\beta'_p$, $\beta'_q\geq 0$.

Special case: Assume $N'=A'$ and $N''=A''$. Set $\tilde{\eta}=\tilde{\eta}^{A',A''}_{M',M''}$. It is straightforward to show that the following diagram commutes.  $$\xymatrix{\hom_{A'}(A',M')\otimes_k \hom_{A''}(A'',M'') \ar[d]_-{\tilde{\eta}} \ar[r]_-{\cong} & M'\otimes_k M''\\ 
 \hom_A(A,M'\otimes_k M'')  \ar[ur]_{\cong} &}$$.
 
\noindent Hence $\tilde{\eta}$ is an isomorphism in this case.

General case: Set $\tilde{\eta}'=\tilde{\eta}^{N',N''}_{M',M''}$. First we have \begin{align*} N'\otimes_k N'' \cong& \bigoplus_{p\geq p_0} \bigoplus_{q\geq q_0} \Sigma^{p+q} (A'\otimes_k A'')^{\beta'_p \beta''_q}.
\end{align*}

\noindent Now, for all $m\in \mathbb{Z}$, our boundedness condition on $M'$ implies that
    
\begin{align*} \hom_{A'}(N',M')_m \cong& \hom_{A'}\left(\bigoplus_{p\geq p_0} \Sigma^p (A')^{\beta'_p}, M'\right)_m\\
\cong& \prod_{p\geq p_0} \hom_{A'}(\Sigma^p (A')^{\beta'_p}, M')_m\\
=& \bigoplus_{p\geq p_0} \hom_{A'} (\Sigma^p (A')^{\beta_p}, M')_m.
\end{align*}

\noindent Similarly, for all $n\in \mathbb{Z}$, we have

\begin{align*}\hom_{A''}(N'',M'')_n \cong& \hom_{A''}\left(\displaystyle{\bigoplus_{q>q_0}}\Sigma^{q} (A'')^{\beta''_q}, M''\right)_n\\
\cong& \displaystyle{\bigoplus_{q\geq q_0} \hom_{A''}(\Sigma^q (A'')^{\beta''_q}, M'')_n}.
\end{align*}

\noindent The domain of $\tilde{\eta}'_i$ decomposes as follows.

\begin{align*} [\hom_{A'}(N',M')\otimes_k \hom_{A''}(N'',M'')]_i \hspace{-3cm}\\
\cong& \bigoplus_{m+n=i}\left[\hom_{A'}\left(\bigoplus_{p\geq p_0} \Sigma^p (A')^{\beta'_p}, M'\right)_m\otimes_k \hom_{A''}\left(\bigoplus_{q\geq q_0} \Sigma^q (A'')^{\beta''_q}, M''\right)_n\right] \\
\cong& \bigoplus_{m+n=i} \bigoplus_{p\geq p_0} \bigoplus_{q\geq q_0} \left[\hom_{A'}(\Sigma^p A',M)^{\beta'_p}_m\otimes_k \hom_{A''}(\Sigma^q A'', M'')^{\beta''_q}_n\right]\\
\cong& \bigoplus_{m+n=i} \bigoplus_{p\geq p_0} \bigoplus_{q\geq q_0} \Sigma^{-p-q} [\hom_{A'}(A',M)_m\otimes_k  \hom_{A''}( A'', M'')_n]^{\beta'_p \beta''_q}\\
\cong&  \bigoplus_{p\geq p_0} \bigoplus_{q\geq q_0} \Sigma^{-p-q} \left[\bigoplus_{m+n=i} \hom_{A'}(A',M)_m\otimes_k  \hom_{A''}( A'', M'')_n\right]^{\beta'_p \beta''_q}\\
\cong& \bigoplus_{p\geq p_0} \bigoplus_{q\geq q_0} \Sigma^{-p-q} [\hom_{A'}(A',M)\otimes_k  \hom_{A''}( A'', M'')]_i^{\beta'_p \beta''_q}
     \end{align*}

\noindent Next, we consider the codomain in degree $i$. \begin{align*} \hom_A(N'\otimes_k N'', M'\otimes_k M'')_i \cong& \hom_A\left(\left(\bigoplus_{p\geq p_0} \Sigma^p (A')^{\beta'_p}\right)\otimes_k \left(\bigoplus_{q\geq q_0} \Sigma^q (A'')^{\beta''_q}\right), M'\otimes_k M''\right)_i\\
\cong& \hom_A\left(\bigoplus_{p\geq p_0} \bigoplus_{q\geq q_0} \Sigma^{p+q} (A'\otimes_k A'')^{\beta'_p \beta''_q}, M'\otimes_k M''\right)_i\\
\cong& \bigoplus_{p\geq p_0} \bigoplus_{q\geq q_0} \hom_A(\Sigma^{p+q} (A'\otimes_k A'')^{\beta'_p \beta''_q}, M'\otimes_k M'')_i\\
\cong& \bigoplus_{p\geq p_0} \bigoplus_{q\geq q_0} \Sigma^{-p-q} \hom_A(A'\otimes_k A'', M'\otimes_k M'')_i^{\beta'_p \beta''_q}  
\end{align*}

\noindent It is straightforward to show that $\tilde{\eta}$ is compatible with direct sums and shifts. Therefore, we have $\tilde{\eta}'=\bigoplus_{p\geq p_0} \bigoplus_{q\geq q_0} \Sigma^{-p-q}\tilde{\eta}.$ Since $\tilde{\eta}$ is an isomorphism by our special case, we conclude that $\tilde{\eta}'$ is an isomorphism.
\end{proof}

\begin{remark} Assume that $R=k$ is a field. Let $N'$ and $N''$ be DG $A'$- and $A''$-modules respectively. Let $P'\xra{\simeq} N'$ and $P''\xra{\simeq} N''$ be semiprojective resolutions over $A'$ and $A"$, respectively. By Lemma~\ref{lem140718a}, we have that $P'\otimes_k P'' \xra{\simeq} N'\otimes_k N''$ is a semiprojective resolution over $A$. Therefore, $\tilde{\eta}^{P',P''}_{M',M''}:\hom_{A'}(P',M')\otimes_k \hom_{A''}(P'',M'')\to \hom_A(P'\otimes_k P'',M'\otimes_k M'')$ represents a well-defined morphism $\eta^{N',N''}_{M',M''}: \rhom_{A'}(N',M')\otimes_k \rhom_{A''}(N'',M'')\to \rhom_A(N'\otimes_k N'',M'\otimes_k M'')$ in $D(A)$.
\end{remark}

For the nest result, notice if $A'$ and $A''$ are weakly noetherian, then DG modules $N'\in D_{+}^f(A')$ and $N''\in D_{+}^f(A'')$ admit degreewise finite semifree resolutions by Fact~\ref{fact141013c}.

\begin{prop}\label{lem140924g} Assume that $R=k$ is a field. Let $N'\in D^f_{+}(A')$ and $N''\in D^f_{+}(A'')$ admit degreewise finite semifree resolutions over $A'$ and $A''$, respectively, and $M'\in D_{-}(A')$ and $M''\in D_{-}(A'')$. Then $\eta^{N',N''}_{M',M''}$ is an isomorphism in $D(A)$.
\end{prop}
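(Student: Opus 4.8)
The plan is to reduce the statement to the DG-module-level isomorphism of Proposition~\ref{lem140924c} by choosing convenient representatives for $N',N'',M',M''$ and bookkeeping the resulting quasi-isomorphisms. First I would fix degreewise finite semifree resolutions $P'\xra{\simeq}N'$ and $P''\xra{\simeq}N''$ over $A'$ and $A''$, which exist by hypothesis; since $A'$ and $A''$ are positively graded and $N',N''$ are homologically bounded below, I may take $P'$ and $P''$ to be bounded below as complexes. By the remark preceding the proposition (and Fact~\ref{fact141013c}(c), which makes these semiprojective), the morphism $\eta^{N',N''}_{M',M''}$ is represented in $D(A)$ by the DG-module morphism $\tilde{\eta}^{P',P''}_{M',M''}$, whose target is computed from the semiprojective resolution $P'\otimes_k P''\xra{\simeq}N'\otimes_k N''$ of Lemma~\ref{lem140718a}. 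Hence it suffices to prove that $\tilde{\eta}^{P',P''}_{M',M''}$ is a quasi-isomorphism.

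The main obstacle is that Proposition~\ref{lem140924c} requires $M'$ and $M''$ to be genuinely bounded-above DG modules, whereas the hypothesis only gives homological boundedness above, i.e. $M'\in D_-(A')$ and $M''\in D_-(A'')$. I would resolve this by soft truncation: set $s=\sup(M')$ and let $\widetilde{M'}=\tau_{\leq s}M'$ be the soft truncation, a bounded-above DG $A'$-module equipped with the natural quasi-isomorphism $M'\xra{\simeq}\widetilde{M'}$, and define $\widetilde{M''}$ similarly. One checks $\tau_{\leq s}$ yields an honest DG quotient module precisely because $A'$ is positively graded, so elements of positive degree carry the truncated top degree into a degree that has been set to zero. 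Because $P'$ is semiprojective, $\hom_{A'}(P',-)$ preserves quasi-isomorphisms, giving $\hom_{A'}(P',M')\xra{\simeq}\hom_{A'}(P',\widetilde{M'})$ and likewise over $A''$. For the target, Lemma~\ref{lem140718d} gives $M'\otimes_k M''\xra{\simeq}\widetilde{M'}\otimes_k\widetilde{M''}$, and since $P'\otimes_k P''$ is semiprojective over $A$, applying $\hom_A(P'\otimes_k P'',-)$ turns this into a quasi-isomorphism as well.

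With the truncations in hand, $P'$ and $P''$ are degreewise finite, semifree, and bounded below, while $\widetilde{M'}$ and $\widetilde{M''}$ are bounded above, so Proposition~\ref{lem140924c} shows that $\tilde{\eta}^{P',P''}_{\widetilde{M'},\widetilde{M''}}$ is an isomorphism of DG $A$-modules. Finally I would assemble the square relating $\tilde{\eta}^{P',P''}_{M',M''}$ and $\tilde{\eta}^{P',P''}_{\widetilde{M'},\widetilde{M''}}$, whose vertical maps are the quasi-isomorphisms induced by $M'\to\widetilde{M'}$ and $M''\to\widetilde{M''}$; this square commutes by functoriality of $\boxtimes$ in the target variables, equivalently by naturality of $\tilde{\eta}$ in $M'$ and $M''$. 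Since the bottom map is an isomorphism and both verticals are quasi-isomorphisms, the top map $\tilde{\eta}^{P',P''}_{M',M''}$ is a quasi-isomorphism, and therefore $\eta^{N',N''}_{M',M''}$ is an isomorphism in $D(A)$. I expect the truncation step — passing from homological boundedness to a genuinely bounded-above DG representative while keeping the identification of $\eta$ with $\tilde{\eta}$ intact — to be the only real point requiring care.
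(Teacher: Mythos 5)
Your proposal is correct and follows essentially the same route as the paper's proof: replace $M'$ and $M''$ by bounded-above representatives (the paper simply asserts such exist; your soft truncation realizes this), replace $N'$ and $N''$ by their degreewise finite, bounded-below semifree resolutions, and then invoke Proposition~\ref{lem140924c}. The extra bookkeeping you supply (semiprojectivity making $\hom$ preserve the truncation quasi-isomorphisms, and the commuting square by naturality of $\tilde{\eta}$) is exactly the justification the paper leaves implicit.
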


\begin{proof} Notice that $M'$ and $M''$ homologically bounded above implies there exists $L'$ and $L''$ such that $\alpha':M'\xra{\cong} L'$ and $\alpha'':M''\xra{\cong} L''$ where $L'$ and $L''$ are bounded above. Therefore, we can replace $M'$ and $M''$ by $L'$ and $L''$ to assume that $M'$ and $M''$ are bounded above. By assumption, there exist semifree resolutions $P'\xra{\simeq} N'$ and $P''\xra{\simeq} N''$ such that $P',P''$ are bounded below and degreewise finite. Therefore, we can replace $N'$ and $N''$ by $P'$ and $P''$ respectively to assume that $N'$ and $N''$ are semifree, bounded below, and degreewise finite. The result now follows from Lemma~\ref{lem140924c}.
\end{proof}

\section{Semidualizing DG Modules} 

In this section we prove the main result of this paper and document a few corollaries. 

\begin{assumption}\label{assump141124a} In this section $k$ is a field, $A'$ and $A''$ are DG $k$-algebras such that $A'\not\simeq 0\not\simeq A''$, and $A:=A'\otimes_k A''$.
\end{assumption}

The next two results are the keys for proving Theorem~\ref{thm141112b} from the introduction.

\begin{thm}\label{thm141016a} If $M'$ and $M''$ are semidualizing over $A'$ and $A''$, respectively, then $M'\otimes_k M''$ is semidualizing over $A$. If $A'$ and $A''$ are mildly noetherian and $M'\in D^f_b(A')$, $M''\in D^f_b(A'')$, then the converse holds.
  \end{thm}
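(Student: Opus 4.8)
The plan is to reduce both implications to the single commutative square
$$\xymatrix{ A'\otimes_k A'' \ar@{=}[d] \ar[r]^-{\chi^{A'}_{M'}\otimes_k \chi^{A''}_{M''}} & \rhom_{A'}(M',M')\otimes_k \rhom_{A''}(M'',M'') \ar[d]^-{\eta^{M',M''}_{M',M''}} \\ A \ar[r]^-{\chi^A_{C}} & \rhom_A(C,C) }$$
in $D(A)$, where $C:=M'\otimes_k M''$, and then to exploit two facts: the comparison morphism $\eta^{M',M''}_{M',M''}$ is an isomorphism by Proposition~\ref{lem140924g}, and the functor $-\otimes_k-$ detects quasiisomorphisms among nonzero complexes by Lemma~\ref{lem140718d}.

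First I would record that $C$ is homologically finite over $A$ whenever $M'\in D^f_b(A')$ and $M''\in D^f_b(A'')$: Lemma~\ref{lem141028a}(1) gives degreewise finiteness and Lemma~\ref{lem140724b} gives homological boundedness, so $C\in D^f_b(A)$. To produce the degreewise finite semifree resolution demanded by the definition of semidualizing, I would tensor degreewise finite semifree resolutions $F'\xra{\simeq}M'$ and $F''\xra{\simeq}M''$ (these exist by the definition of semidualizing in the forward direction, and by Fact~\ref{fact141013c}(b) when $A',A''$ are mildly noetherian in the converse). By Lemma~\ref{lem140718a} the map $F'\otimes_k F''\xra{\simeq}C$ is a resolution, its underlying module $F'^{\natural}\otimes_k F''^{\natural}$ is $A^{\natural}$-free on the product of the two semibases, and this product is degreewise finite since each factor is bounded below and degreewise finite.

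The heart of the proof, and the step I expect to require the most care, is verifying that the square commutes. I would check this on representatives. Writing $\lambda_{a'}$ for left multiplication by $a'$, the homothety $\chi^{A'}_{M'}$ sends $a'\mapsto\lambda_{a'}$, and $\eta^{M',M''}_{M',M''}$ is induced by $f'\otimes f''\mapsto f'\boxtimes f''$ (Definition~\ref{lem140924b}); hence the upper-right composite sends $a'\otimes a''$ to $\lambda_{a'}\boxtimes\lambda_{a''}$. Evaluating on $x'\otimes x''$ yields $(-1)^{|a''||x'|}(a'x')\otimes(a''x'')$, which is precisely $(a'\otimes a'')(x'\otimes x'')$, i.e. the image of $a'\otimes a''$ under $\chi^A_C$. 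The only obstacle is the Koszul sign bookkeeping, but the sign in the definition of $\boxtimes$ was chosen exactly to match the multiplication on $A=A'\otimes_k A''$, so the two composites agree.

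With the square in hand both implications are formal, noting that Proposition~\ref{lem140924g} applies since $M'\in D^f_+(A')$ admits a degreewise finite semifree resolution and $M'\in D_-(A')$ (and likewise for $A''$), so $\eta^{M',M''}_{M',M''}$ is an isomorphism. For the forward direction, $\chi^{A'}_{M'}$ and $\chi^{A''}_{M''}$ are isomorphisms by hypothesis, so $\chi^{A'}_{M'}\otimes_k\chi^{A''}_{M''}$ is a quasiisomorphism by Lemma~\ref{lem140718d}; composing with the isomorphism $\eta^{M',M''}_{M',M''}$ shows $\chi^A_C$ is an isomorphism, so $C$ is semidualizing. For the converse, $\chi^A_C$ is an isomorphism because $C$ is semidualizing, whence $\chi^{A'}_{M'}\otimes_k\chi^{A''}_{M''}$ is a quasiisomorphism. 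To invoke the converse half of Lemma~\ref{lem140718d} I would first verify the nonvanishing hypotheses: $A'\not\simeq0\not\simeq A''$ by Assumption~\ref{assump141124a}, and since $\rhom_A(C,C)\simeq A\not\simeq0$ corresponds under $\eta^{M',M''}_{M',M''}$ to $\rhom_{A'}(M',M')\otimes_k\rhom_{A''}(M'',M'')$, the K\"unneth formula (Fact~\ref{fact141014a}) forces both $\rhom_{A'}(M',M')\not\simeq0$ and $\rhom_{A''}(M'',M'')\not\simeq0$. Then Lemma~\ref{lem140718d} gives that $\chi^{A'}_{M'}$ and $\chi^{A''}_{M''}$ are quasiisomorphisms, hence isomorphisms in $D(A')$ and $D(A'')$, and together with the resolutions from Fact~\ref{fact141013c}(b) this shows $M'$ and $M''$ are semidualizing.
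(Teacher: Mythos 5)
Your proof is correct and follows essentially the same route as the paper's: the same commutative diagram relating $\chi^{A'}_{M'}\otimes_k\chi^{A''}_{M''}$, $\eta^{M',M''}_{M',M''}$, and $\chi^{A}_{M'\otimes_k M''}$, with Proposition~\ref{lem140924g} supplying the isomorphism $\eta$ and Lemma~\ref{lem140718d} (together with the nonvanishing of $A'$, $A''$, $\rhom_{A'}(M',M')$, and $\rhom_{A''}(M'',M'')$) driving both implications. The only differences are that you spell out details the paper leaves implicit---the Koszul-sign check that the diagram commutes and the explicit degreewise finite semifree resolution of $M'\otimes_k M''$ obtained by tensoring resolutions---and you deduce the nonvanishing of the two $\rhom$ complexes directly from the K\"unneth formula, where the paper additionally cites Lemma~\ref{lem140724a}.
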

  
  \begin{proof} Step 1. Note that $A',A''\not\simeq 0$, by Assumption~\ref{assump141124a}. Thus we have $A\not\simeq 0$, e.g., by the K\"unneth formula. 
  
  Step 2. If $M'\in \s(A')$, then $M'\not\simeq 0$ because $\rhom_{A'}(M',M')\simeq A'\not\simeq 0$. On the other hand, if $M'\otimes_k M''\in \s(A)$, then $M'\otimes_k M''\not\simeq 0$, so $M'\not\simeq 0$. Thus, we assume for the remainder of the proof that $M'\not\simeq 0$ and similarly, $M''\not\simeq 0$. 
  
  Step 3. In the forward implication we assume $M'\in \s(A')$ and $M''\in \s(A'')$, therefore we have $M'\in D_b^f(A')$ and $M''\in D_b^f(A'')$. Thus, we assume for the remainder of the proof that $M'\in D_b^f(A')$ and $M''\in D_b^f(A'')$. 
  
  Step 4. We assume for the remainder of the proof that $M'$ and $M''$ admit degreewise finite semifree resolutions. Notice, in the forward implication, the conditions $M'\in \s(A')$ and $M''\in \s(A'')$ guarantee that such resolutions exist; in the reverse implication, since $A'$ and $A''$ are weakly noetherian and $M'\in D^f_b(A')$, $M''\in D^f_b(A'')$, Fact~\ref{fact141013c}(b) guarantees that such resolutions exist. Note that it follows that the DG module $M'\otimes_k M''\in D_b^f(A)$ has such a resolution over $A$; see Lemmas~\ref{lem140724b}(a) and \ref{lem141028a}..
  
  Step 5: Consider the following commutative diagram in $D(A)$.
  
$$\xymatrix@C=2cm{A=A'\otimes_k A'' \ar[r]^-{\chi_{M'}^{A'} \otimes_k \chi_{M''}^{A''}} \ar[dr]_<<<<<<<<<<<<<<<{\chi_{M'\otimes_k M''}^{A}} & \mathbf{R}\Hom_{A'}(M',M') \otimes_k \mathbf{R}\Hom_{A''}(M'',M'') \ar[d]^{\eta^{M',M''}_{N',N''}}_{\simeq}\\
& \mathbf{R}\Hom_{A}(M'\otimes_k M'',M'\otimes_k M'') }$$

\noindent Notice that the morphism $\eta^{M',M''}_{N',N''}$ in this diagram is an isomorphism by Lemma~\ref{lem140924g}.

In the forward implication, the morphism $\chi_{M'}^{A'}$ is an isomorphism in $D(A')$ and $\chi_{M''}^{A''}$ is an isomorphism in $D(A'')$, so $\chi_{M'}^{A'}\otimes_k \chi_{M''}^{A''}$ is an isomorphism in $D(A)$ by Lemma~\ref{lem140718d}. Therefore, the commutative diagram implies that $\chi_{M'\otimes_k M''}^{A}$ is an isomorphism in $D(A)$.

In the reverse implication, our commutative diagram with $\eta^{M',M''}_{N',N''}$ and $\chi_{M'\otimes_k M''}^{A}$ isomorphisms in $D(A)$ imply that $\chi_{M'}^{A'}\otimes_k \chi_{M''}^{A''}$ is an isomorphism in $D(A)$. In particular, we have $$\rhom_{A'}(M',M')\otimes_k \rhom_{A''}(M'',M'')\simeq A\not\simeq 0$$ so $\rhom_{A'}(M',M'),\rhom_{A''}(M'',M'')\not\simeq 0$. Thus Lemma~\ref{lem140724a} and Lemma~\ref{lem140718d} imply that $\chi_{M'}^{A'}$ is an isomorphism in $D(A')$ and $\chi_{M''}^{A''}$ is an isomorphism in $D(A'')$. 
 \end{proof}
  
\begin{thm}\label{lem140922a} Fix $M'\in \s(A')$ and $M''\in \s(A'')$, and let $N'\in D(A')$ and $N''\in D(A'')$. If $N'\in \catb_{M'}(A')$ and $N''\in \catb_{M''}(A'')$, then $N'\otimes_k N''\in \catb_{M'\otimes_k M''}(A)$. If $A'$ and $A''$ are mildly noetherian and $N'$, $N''\not\simeq 0$, then the converse holds.
\end{thm}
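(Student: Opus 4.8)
The plan is to characterize membership in the Bass class $\catb_{M'\otimes_k M''}(A)$ using the two defining conditions---the boundedness of $\rhom_A(M'\otimes_k M'', N'\otimes_k N'')$ and the fact that the evaluation morphism $\xi^{M'\otimes_k M''}_{N'\otimes_k N''}$ is an isomorphism---and to translate each of these, via the compatibility isomorphisms from Section 3, into the corresponding statements over $A'$ and $A''$. First I would rewrite the inner Hom-complex: using the isomorphism $\eta^{M',M''}_{N',N''}$ from Proposition~\ref{lem140924g}, we get $\rhom_A(M'\otimes_k M'', N'\otimes_k N'')\simeq \rhom_{A'}(M',N')\otimes_k \rhom_{A''}(M'',N'')$ in $D(A)$. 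Then I would feed this into the derived-tensor compatibility $\gamma^{M',M''}_{?,?}$ from Lemma~\ref{lem141028c} to compute $(M'\otimes_k M'')\lotimes_A \rhom_A(M'\otimes_k M'',N'\otimes_k N'')\simeq (M'\lotimes_{A'}\rhom_{A'}(M',N'))\otimes_k (M''\lotimes_{A''}\rhom_{A''}(M'',N''))$. Under these identifications the evaluation morphism over $A$ should factor as the tensor product $\xi^{M'}_{N'}\otimes_k \xi^{M''}_{N''}$ of the two evaluation morphisms; verifying this factorization is the heart of the argument.

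Once that factorization is in hand, the forward implication follows quickly: if $N'\in\catb_{M'}(A')$ and $N''\in\catb_{M''}(A'')$, then each $\xi^{M'}_{N'}$, $\xi^{M''}_{N''}$ is an isomorphism, so by Lemma~\ref{lem140718d} their tensor product $\xi^{M'}_{N'}\otimes_k \xi^{M''}_{N''}\simeq \xi^{M'\otimes_k M''}_{N'\otimes_k N''}$ is an isomorphism in $D(A)$. For the boundedness condition, each $\rhom_{A'}(M',N')$ and $\rhom_{A''}(M'',N'')$ lies in $D_b$ by hypothesis, so Lemma~\ref{lem140724b} gives that their $k$-tensor product---hence $\rhom_A(M'\otimes_k M'', N'\otimes_k N'')$---is homologically bounded. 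This shows $N'\otimes_k N''\in\catb_{M'\otimes_k M''}(A)$.

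For the reverse implication, I would add the hypotheses that $A'$, $A''$ are mildly noetherian and $N'$, $N''\not\simeq 0$, and run the equivalences backwards using the nonvanishing half of the Künneth-type lemmas. Assuming $N'\otimes_k N''\in\catb_{M'\otimes_k M''}(A)$, the isomorphism $\xi^{M'}_{N'}\otimes_k \xi^{M''}_{N''}\simeq \xi^{M'\otimes_k M''}_{N'\otimes_k N''}$ is an isomorphism, and since $M'$, $M''$, $N'$, $N''\not\simeq 0$ (using Lemma~\ref{lem140724a} to see that $M'\lotimes_{A'}\rhom_{A'}(M',N')\not\simeq 0$, etc.), the converse direction of Lemma~\ref{lem140718d} forces each $\xi^{M'}_{N'}$ and $\xi^{M''}_{N''}$ to be an isomorphism. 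Likewise, boundedness of $\rhom_A(M'\otimes_k M'',N'\otimes_k N'')$ combined with $\rhom_{A'}(M',N')\not\simeq 0\not\simeq\rhom_{A''}(M'',N'')$ and the converse half of Lemma~\ref{lem140724b} yields $\rhom_{A'}(M',N')\in D_b(A')$ and $\rhom_{A''}(M'',N'')\in D_b(A'')$. This gives $N'\in\catb_{M'}(A')$ and $N''\in\catb_{M''}(A'')$.

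The main obstacle I anticipate is the compatibility step: checking that under the identifications coming from $\eta$ and $\gamma$, the global evaluation morphism $\xi^{M'\otimes_k M''}_{N'\otimes_k N''}$ genuinely corresponds to $\xi^{M'}_{N'}\otimes_k \xi^{M''}_{N''}$. This requires chasing the explicit formulas for $f'\boxtimes f''$ (Definition~\ref{lem140924b}) and the maps $\tilde{\gamma}$, $\tilde{\eta}$ through a diagram at the level of semiprojective resolutions, keeping careful track of the Koszul signs $(-1)^{|y'||x''|}$ and $(-1)^{|f''||x'|}$. The nonvanishing bookkeeping needed to invoke the converses of Lemmas~\ref{lem140718d} and \ref{lem140724b} is routine but must be done carefully, since it relies on $M'$, $M''\not\simeq 0$ (guaranteed because they are semidualizing) together with the hypothesis $N'$, $N''\not\simeq 0$.
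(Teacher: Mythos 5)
Your proposal matches the paper's proof essentially step for step: the same rewriting of the Hom-complex via $\eta^{M',M''}_{N',N''}$ (Proposition~\ref{lem140924g}) and of the derived tensor via $\gamma$ (Lemma~\ref{lem141028c}), the same commutative diagram identifying $\xi^{M'\otimes_k M''}_{N'\otimes_k N''}$ with $\xi^{M'}_{N'}\otimes_k\xi^{M''}_{N''}$, and the same use of Lemmas~\ref{lem140724a}, \ref{lem140724b}, and \ref{lem140718d} for the nonvanishing bookkeeping in the converse. The only point you leave implicit is the preliminary reduction that $N'\otimes_k N''\in D_b(A)$ if and only if $N',N''\in D_b$ (needed because Bass-class membership presupposes homological boundedness of the module itself), which the paper dispatches as its Step~2 using Lemma~\ref{lem140724b}.
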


\begin{proof} Step 1: If $N'\simeq 0$ or $N''\simeq 0$, then $N'\otimes_k N'' \simeq 0\in \catbc(A)$. Therefore, assume for the rest of the proof that $N', N''\not\simeq 0$.

Step 2: By Lemma~\ref{lem140724b} we have $N'\in D_b(A')$ and $N''\in D_b(A'')$ if and only if $N'\otimes_k N''\in D_b(A)$. Therefore, assume for the rest of the proof that $N'\in D_b(A')$ and $N''\in D_b(A'')$.

Step 3: We show that $\rhom_{A'}(M',N')\in D_b(A')$ and $\rhom_{A''}(M'',N'')\in D_b(A'')$ if and only if $\rhom_{A}(M'\otimes_k M'', N'\otimes_k N'')\in D_b(A)$. Notice, by Lemma~\ref{lem140924g} we have $$\rhom_{A}(M'\otimes_k M'', N'\otimes_k N'')\simeq \rhom_{A'}(M',N')\otimes_k \rhom_{A''}(M'',N'')$$ in $D(A)$. Now, by Lemma~\ref{lem140724a}, since $M'\in \s(A')$ and $N'\not\simeq 0$ we have $\rhom_{A'}(M',N')\not\simeq 0$. Similarly, $\rhom_{A''}(M'',N'')\not\simeq 0$. Therefore, by Lemma~\ref{lem140724b} parts (a) and (b) we have $\rhom_{A'}(M',N')\in D_b(A')$ and $\rhom_{A''}(M'',N'')\in D_b(A'')$ if and only if $\rhom_{A}(M'\otimes_k M'', N'\otimes_k N'')\in D_b(A)$.

Therefore, assume for the rest of the proof $\rhom_{A'}(M',N')\in D_b(A')$ and $\rhom_{A''}(M'',N'')\in D_b(A'')$.

Step 4: We need to show that $\xi_{N'\otimes_k N''}^{M'\otimes_k M''}$ is an isomorphism in $D(A)$ if and only if $\xi_{N'}^{M'}$ and $\xi_{N''}^{M''}$ are isomorphisms in $D(A')$ and $D(A'')$, respectively.
Consider the following commutative diagram in $D(A)$.

$$\xymatrix@C=8mm{(M'\lotimes_{A'}\rhom_{A'}(M',N'))\otimes_k (M''\lotimes_{A''}\rhom_{A''}(M'',N'')) \ar[d]^-{\simeq}_{\gamma^{M',M''}_{\rhom_{A'}(M',N'),\rhom_{A''}(M'',N'')}} \ar[dr]^>>>>>>>>>>>>{\xi_{N'}^{M'}\otimes_k \xi_{N''}^{M''}}  & \\
 (M'\otimes_k M'')\lotimes_A(\rhom_{A'}(M',N')\otimes_k \rhom_{A''}(M'',N'')) \ar[d]^-{\simeq}_-{(M'\otimes_k M'')\lotimes_A \eta^{M',M''}_{N',N''}} & N'\otimes_k N''\\
 (M'\otimes_k M'')\lotimes_A \rhom_A(M'\otimes_k M'',N'\otimes_k N'') \ar[ur]_>>>>>>>>>>>>>>{\xi_{N'\otimes_k N''}^{M'\otimes_k M''}} }$$
 
\noindent Notice that $\gamma^{M',M''}_{\rhom_{A'}(M',N'),\rhom_{A''}(M'',N'')}$ and $(N'\otimes_k N'')\lotimes_A \eta^{M',M''}_{N',N''}$ are isomorphisms by Lemmas~\ref{lem141028c} and \ref{lem140924g}. Thus we have that $\xi_{N'\lotimes_ k N''}^{M'\lotimes_k M''}$ is an isomorphism if and only if $\xi_{N'}^{M'}\otimes_k \xi_{N''}^{M''}$ is an isomorphism, that is, if and only if $\xi_{N'}^{M'}$ and $\xi_{N''}^{M''}$ are isomorphisms by Lemma~\ref{lem140718d} and Lemma~\ref{lem140724a}. (Note that this uses the following: by Lemma~\ref{lem140724a}, since $N'\not\simeq 0$ and $M'\in \s(A')$ we have $\rhom_{A'}(M',N')\not\simeq 0$ and furthermore, $M'\lotimes_{A'} \rhom_{A'}(M',N')\not\simeq 0$.) 
 \end{proof}
 
The next two results are proved similarly to Theorem~\ref{lem140922a}.
 
\begin{thm} Fix $M'\in \s(A')$ and $M''\in \s(A'')$ and let $N'\in D(A')$ and $N''\in D(A'')$. If $N'\in \cata_{M'}(A')$ and $N''\in \cata_{M''}(A'')$, then $N'\otimes_k N''\in \cata_{M'\otimes_k M''}(A)$. If $A'$ and $A''$ are mildly noetherian and $N'\not\simeq 0$ and $N''\not\simeq 0$, then the converse holds.
\end{thm}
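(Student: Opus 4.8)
The plan is to mirror the four-step structure of the proof of Theorem~\ref{lem140922a}, swapping the roles of $\rhom$ and $\lotimes$ throughout and replacing the evaluation morphism $\xi$ with the natural morphism $\gamma$ that detects the Auslander class. Recall that membership $N'\otimes_k N''\in\cata_{M'\otimes_k M''}(A)$ requires two things: that $(M'\otimes_k M'')\lotimes_A(N'\otimes_k N'')\in D_b(A)$, and that the natural morphism $\gamma^{M'\otimes_k M''}_{N'\otimes_k N''}\colon N'\otimes_k N''\to\rhom_A(M'\otimes_k M'',(M'\otimes_k M'')\lotimes_A(N'\otimes_k N''))$ is an isomorphism in $D(A)$.

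First I would dispose of the trivial case: if $N'\simeq 0$ or $N''\simeq 0$ then $N'\otimes_k N''\simeq 0\in\cata_{M'\otimes_k M''}(A)$, so I assume $N',N''\not\simeq 0$ for the rest. Next, since membership in an Auslander class presupposes homological boundedness and since $N'\otimes_k N''\in\cata_{M'\otimes_k M''}(A)$ forces $N'\otimes_k N''\in D_b(A)$, Lemma~\ref{lem140724b} lets me reduce to the case $N'\in D_b(A')$ and $N''\in D_b(A'')$, these being equivalent to $N'\otimes_k N''\in D_b(A)$. For the boundedness condition on the derived tensor, I would invoke Lemma~\ref{lem141028c} to obtain $(M'\otimes_k M'')\lotimes_A(N'\otimes_k N'')\simeq(M'\lotimes_{A'}N')\otimes_k(M''\lotimes_{A''}N'')$ in $D(A)$; since $M'\in\s(A')$ and $N'\not\simeq 0$, Lemma~\ref{lem140724a} gives $M'\lotimes_{A'}N'\not\simeq 0$ (and symmetrically for the double-prime factor), so Lemma~\ref{lem140724b} shows the tensor is bounded over $A$ if and only if $M'\lotimes_{A'}N'\in D_b(A')$ and $M''\lotimes_{A''}N''\in D_b(A'')$.

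The heart of the argument is the final step, establishing that $\gamma^{M'\otimes_k M''}_{N'\otimes_k N''}$ is an isomorphism exactly when both $\gamma^{M'}_{N'}$ and $\gamma^{M''}_{N''}$ are. For this I would assemble a commutative triangle in $D(A)$ with common source $N'\otimes_k N''$, whose two legs are $\gamma^{M'}_{N'}\otimes_k\gamma^{M''}_{N''}$ and $\gamma^{M'\otimes_k M''}_{N'\otimes_k N''}$, and whose connecting morphism is the composite of the isomorphism $\eta^{M',M''}_{M'\lotimes_{A'}N',\,M''\lotimes_{A''}N''}$ from Proposition~\ref{lem140924g} with $\rhom_A(M'\otimes_k M'',-)$ applied to the isomorphism $\gamma^{M',M''}_{N',N''}$ of Lemma~\ref{lem141028c}; both are isomorphisms, so the connecting morphism is one. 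Given commutativity, the target $\rhom_{A'}(M',M'\lotimes_{A'}N')\otimes_k\rhom_{A''}(M'',M''\lotimes_{A''}N'')$ has each factor nonzero, because $N'\not\simeq 0$ forces $M'\lotimes_{A'}N'\not\simeq 0$ and hence $\rhom_{A'}(M',M'\lotimes_{A'}N')\not\simeq 0$ by two applications of Lemma~\ref{lem140724a}. Thus Lemma~\ref{lem140718d} converts ``$\gamma^{M'}_{N'}\otimes_k\gamma^{M''}_{N''}$ is an isomorphism'' into ``$\gamma^{M'}_{N'}$ and $\gamma^{M''}_{N''}$ are isomorphisms,'' yielding both implications.

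The main obstacle I anticipate is verifying the commutativity of this triangle: one must check that the comparison isomorphism $\gamma^{M',M''}_{N',N''}$ and the Hom--tensor map $\eta$ are genuinely compatible with the adjunction unit $\gamma^{C}_{N}$ defining the Auslander class, i.e.\ that the triangle closes on the nose in $D(A)$ rather than merely up to a sign or a shift. This is the same bookkeeping that underlies Step~4 of Theorem~\ref{lem140922a}, but with the evaluation map replaced by the adjunction unit, so the coherence between the tensor-product identifications of Lemmas~\ref{lem141028c} and \ref{lem140924g} must be tracked carefully through the chosen semiprojective resolutions, paying attention to the Koszul signs introduced in the definition of $f'\boxtimes f''$.
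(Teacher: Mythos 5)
Your proposal is correct and takes essentially the same approach the paper intends: the paper offers no separate argument for this theorem, stating only that it is ``proved similarly to Theorem~\ref{lem140922a},'' and your four-step adaptation---replacing the evaluation morphism $\xi$ with the unit $\gamma$, using Lemma~\ref{lem141028c} in place of Proposition~\ref{lem140924g} where the derived tensor appears, and invoking Lemmas~\ref{lem140724a}, \ref{lem140724b}, and \ref{lem140718d} for the nonvanishing, boundedness, and two-out-of-three steps---is exactly that intended adaptation, including the commutative triangle with source $N'\otimes_k N''$ whose connecting morphism is the composite of $\eta$ with $\rhom_A(M'\otimes_k M'',\gamma^{M',M''}_{N',N''})$.
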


 \begin{thm} Fix $M'\in \s(A')$ and $M''\in \s(A'')$ and let $N'\in D_b^f(A')$ and $N''\in D_b^f(A'')$. If $N'$ is derived $M'$ reflexive over $A'$ and $N''$ is derived $M''$ reflexive over $A''$, then $N'\otimes_k N''$ is derived $M'\otimes_k M''$ reflexive over $A$. If $A'$ and $A''$ are mildly noetherian and $N'\not\simeq 0$ and $N''\not\simeq 0$, then the converse holds.
\end{thm}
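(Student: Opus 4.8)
The plan is to follow the template of Theorem~\ref{lem140922a}, with the evaluation morphism $\xi$ replaced by the biduality morphism $\delta$ and the single use of Proposition~\ref{lem140924g} replaced by two nested uses. Recall that $N'\otimes_k N''$ being derived $M'\otimes_k M''$-reflexive means two things: $\rhom_A(N'\otimes_k N'',M'\otimes_k M'')\in D_b^f(A)$, and the biduality morphism $\delta_{N'\otimes_k N''}^{M'\otimes_k M''}$ is an isomorphism in $D(A)$. After the reduction to the nonzero case (if $N'\simeq 0$ or $N''\simeq 0$ then $N'\otimes_k N''\simeq 0$ is trivially reflexive), I would verify these two conditions in turn and show each matches the corresponding pair of conditions for $N'$ and $N''$.

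For the finiteness condition, Proposition~\ref{lem140924g} gives the isomorphism
$$\rhom_A(N'\otimes_k N'',M'\otimes_k M'')\simeq \rhom_{A'}(N',M')\otimes_k \rhom_{A''}(N'',M'')$$
in $D(A)$. Since $M'\in\s(A')$ and $N'\not\simeq 0$, Lemma~\ref{lem140724a}(d) forces $\rhom_{A'}(N',M')\not\simeq 0$, and likewise $\rhom_{A''}(N'',M'')\not\simeq 0$. Then Lemma~\ref{lem140724b} matches homological boundedness across the two sides, while Lemma~\ref{lem141028a} handles degreewise finiteness in the forward direction; degreewise finiteness in the converse follows from the mildly noetherian hypothesis, as the DG analogue of the fact that $\ext$ between homologically finite modules is degreewise finite over a noetherian base. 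This shows $\rhom_{A'}(N',M')\in D_b^f(A')$ and $\rhom_{A''}(N'',M'')\in D_b^f(A'')$ if and only if $\rhom_A(N'\otimes_k N'',M'\otimes_k M'')\in D_b^f(A)$; assume these hold henceforth.

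The core of the argument is the biduality morphism. Applying Proposition~\ref{lem140924g} a second time---now to the duals $\rhom_{A'}(N',M')$ and $\rhom_{A''}(N'',M'')$, which lie in $D_b^f$ by the previous step and hence carry degreewise finite semifree resolutions---produces an isomorphism
$$\rhom_A\big(\rhom_A(N'\otimes_k N'',M'\otimes_k M''),M'\otimes_k M''\big)\simeq \rhom_{A'}(\rhom_{A'}(N',M'),M')\otimes_k \rhom_{A''}(\rhom_{A''}(N'',M''),M'').$$
I would then build a commutative triangle in $D(A)$ whose slanted edge is $\delta_{N'\otimes_k N''}^{M'\otimes_k M''}$, whose horizontal edge is $\delta_{N'}^{M'}\otimes_k\delta_{N''}^{M''}$, and whose vertical edge is the composite isomorphism above (namely $\rhom_A(\eta^{N',N''}_{M',M''},M'\otimes_k M'')$ followed by a second instance of $\eta$). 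Granting the triangle, Lemma~\ref{lem140718d} closes both directions: forward, $\delta_{N'}^{M'}$ and $\delta_{N''}^{M''}$ isomorphisms give $\delta_{N'}^{M'}\otimes_k\delta_{N''}^{M''}$ and hence $\delta_{N'\otimes_k N''}^{M'\otimes_k M''}$; conversely, $\delta_{N'\otimes_k N''}^{M'\otimes_k M''}$ an isomorphism gives $\delta_{N'}^{M'}\otimes_k\delta_{N''}^{M''}$, and since $N',N''\not\simeq 0$ and two applications of Lemma~\ref{lem140724a}(d) make the biduals $\rhom_{A'}(\rhom_{A'}(N',M'),M')$ and $\rhom_{A''}(\rhom_{A''}(N'',M''),M'')$ nonzero, the converse half of Lemma~\ref{lem140718d} separates the two factors.

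The main obstacle is establishing commutativity of the biduality triangle: one must check that under the two nested instances of $\eta$ the biduality morphism $\delta_{N'\otimes_k N''}^{M'\otimes_k M''}$ is carried to $\delta_{N'}^{M'}\otimes_k\delta_{N''}^{M''}$. This is the $\delta$-analogue of the two-$\eta$ diagram in Step~4 of Theorem~\ref{lem140922a}, and the verification is a chain-level naturality computation, carried out after replacing every module by a degreewise finite semifree resolution, that tracks how the pairing $\boxtimes$ of Definition~\ref{lem140924b} interacts with biduality and its attendant signs. A secondary point is making sure Proposition~\ref{lem140924g} legitimately applies to the outer $\rhom$, which needs the inner dual to be degreewise finite and bounded below---precisely the content supplied by the finiteness step.
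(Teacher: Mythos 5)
Your proposal is correct and takes essentially the same approach as the paper, whose entire proof of this statement is the remark that it ``is proved similarly to Theorem~\ref{lem140922a}.'' Your write-up---reduction to nonzero factors, the finiteness step via Proposition~\ref{lem140924g} together with Lemmas~\ref{lem140724a}, \ref{lem140724b} and \ref{lem141028a}, then a biduality triangle built from two nested instances of $\eta$ and closed by Lemma~\ref{lem140718d} and Lemma~\ref{lem140724a}(d)---is precisely the adaptation of the Bass-class argument that the paper intends, with the same points (commutativity of the diagram, existence of degreewise finite semifree resolutions) left at the same level of detail as in the paper's own Theorem~\ref{lem140922a}.
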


In the next result, we use the notation of \ref{notation141115a}.

 \begin{thm}\label{lem140718e} Assume $M',N'\in \s(A')$ and $M'',N''\in \s(A'')$. If $M'\approx N'$ and $M''\approx N''$, then $M'\otimes_k M''\approx N'\otimes_k N''$. If $A'$ and $A''$ are mildly noetherian, then the converse holds.
\end{thm}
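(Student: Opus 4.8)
The plan is to unwind the relation $\approx$ into a pair of Bass-class memberships and then feed these directly into Theorem~\ref{lem140922a}. Recall from the proof of Lemma~\ref{lem141113a} that for semidualizing DG modules $B,C$ the assertion $B\approx C$ means precisely that $B\in\catb_C(A)$ and $C\in\catb_B(A)$. Before anything else I would record that $M'\otimes_k M''$ and $N'\otimes_k N''$ are semidualizing over $A$ by Theorem~\ref{thm141016a}, so that the relation $M'\otimes_k M''\approx N'\otimes_k N''$ is meaningful; likewise each of $M',N',M'',N''$ is nonzero, since a semidualizing DG module $C$ satisfies $\rhom_{A'}(C,C)\simeq A'\not\simeq 0$ and hence $C\not\simeq 0$ (cf.\ Step~2 of the proof of Theorem~\ref{thm141016a}). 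With these reductions in place, the entire argument is bookkeeping with Theorem~\ref{lem140922a}.

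For the forward implication, suppose $M'\approx N'$ and $M''\approx N''$. This unpacks into four memberships: $M'\in\catb_{N'}(A')$, $N'\in\catb_{M'}(A')$, $M''\in\catb_{N''}(A'')$, and $N''\in\catb_{M''}(A'')$. I would apply Theorem~\ref{lem140922a} twice. First, taking the fixed semidualizing modules to be $N'\in\s(A')$ and $N''\in\s(A'')$ and the arbitrary modules to be $M',M''$, the hypotheses $M'\in\catb_{N'}(A')$ and $M''\in\catb_{N''}(A'')$ yield $M'\otimes_k M''\in\catb_{N'\otimes_k N''}(A)$. Second, reversing the roles so that $M',M''$ are the fixed semidualizing modules and $N',N''$ the arbitrary ones, the hypotheses $N'\in\catb_{M'}(A')$ and $N''\in\catb_{M''}(A'')$ give $N'\otimes_k N''\in\catb_{M'\otimes_k M''}(A)$. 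These two memberships together are exactly $M'\otimes_k M''\approx N'\otimes_k N''$.

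For the converse, assume $A'$ and $A''$ are mildly noetherian and $M'\otimes_k M''\approx N'\otimes_k N''$, that is, $M'\otimes_k M''\in\catb_{N'\otimes_k N''}(A)$ and $N'\otimes_k N''\in\catb_{M'\otimes_k M''}(A)$. Here I would invoke the converse half of Theorem~\ref{lem140922a}, whose nonvanishing side conditions are supplied by the observation above that $M',N',M'',N''\not\simeq 0$. From the first membership, with $N',N''$ playing the fixed semidualizing role, I extract $M'\in\catb_{N'}(A')$ and $M''\in\catb_{N''}(A'')$; from the second, with $M',M''$ fixed, I extract $N'\in\catb_{M'}(A')$ and $N''\in\catb_{M''}(A'')$. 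Combining, $M'\in\catb_{N'}(A')$ together with $N'\in\catb_{M'}(A')$ gives $M'\approx N'$, and symmetrically $M''\approx N''$.

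Since essentially all of the analytic content is already packaged inside Theorem~\ref{lem140922a}, I do not expect a genuine obstacle. The only points requiring care are keeping straight which pair of modules plays the fixed semidualizing role in each of the four applications of that theorem, and checking that the nonvanishing hypotheses needed for the converse direction are in force, both of which follow routinely from the fact that all four modules are semidualizing and hence nonzero.
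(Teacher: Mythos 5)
Your proposal is correct and is exactly the argument the paper intends: the paper's proof consists of the single line ``By a symmetric argument, this is a consequence of Theorem~\ref{lem140922a},'' and you have simply spelled out the four applications of that theorem (two per direction, swapping which pair plays the fixed semidualizing role) together with the nonvanishing conditions needed for the converse. No discrepancy.
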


\begin{proof}
 By a symmetric argument, this is a consequence of Theorem~\ref{lem140922a}.
\end{proof}

\begin{thm}\label{thm141112a} Assume $A'$ and $A''$ are mildly noetherian. Then the map $\psi:\overline{\s}(A')\times \overline{\s}(A'')\to \overline{\s}(A)$ given by $\psi(C',C'')=C'\otimes_k C''$ is well-defined and injective.
\end{thm}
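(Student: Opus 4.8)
The plan is to assemble this statement directly from Theorem~\ref{thm141016a} and Theorem~\ref{lem140718e}, since those results already encode both the well-definedness and the injectivity of $\psi$. Recall from the notation of~\ref{notation141115a} that $\overline{\s}(A')$ consists of the $\approx$-equivalence classes of semidualizing DG $A'$-modules, so an element is represented by some $C'\in\s(A')$, and similarly for $A''$ and for $A$. The entire argument is then a matter of verifying that tensoring with $\otimes_k$ descends to these quotients and reflects them back.

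First I would check that $\psi$ actually lands in $\overline{\s}(A)$. Given representatives $C'\in\s(A')$ and $C''\in\s(A'')$, the forward direction of Theorem~\ref{thm141016a} guarantees that $C'\otimes_k C''$ is semidualizing over $A=A'\otimes_k A''$, so the class $[C'\otimes_k C'']$ is a legitimate element of $\overline{\s}(A)$. Next, to see that this class is independent of the chosen representatives, suppose $C_1'\approx C_2'$ and $C_1''\approx C_2''$. Then the forward direction of Theorem~\ref{lem140718e} yields $C_1'\otimes_k C_1''\approx C_2'\otimes_k C_2''$, that is, $[C_1'\otimes_k C_1'']=[C_2'\otimes_k C_2'']$ in $\overline{\s}(A)$. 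Hence $\psi$ is well-defined.

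For injectivity, suppose $\psi([C_1'],[C_1''])=\psi([C_2'],[C_2''])$, i.e., $C_1'\otimes_k C_1''\approx C_2'\otimes_k C_2''$. Because $A'$ and $A''$ are assumed mildly noetherian, the converse direction of Theorem~\ref{lem140718e} applies and gives $C_1'\approx C_2'$ and $C_1''\approx C_2''$. Therefore $([C_1'],[C_1''])=([C_2'],[C_2''])$ in $\overline{\s}(A')\times\overline{\s}(A'')$, which is exactly injectivity of $\psi$.

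I do not expect a genuine obstacle here, as the full content of the statement has been prepared in the two cited theorems and the present argument only passes to $\approx$-classes. The one point worth a sentence of care is that this passage is legitimate: $\approx$ must be an equivalence relation on $\s(A')$, on $\s(A'')$, and on $\s(A)$, so that $\overline{\s}$ is a well-defined quotient, and this should already be recorded in~\ref{notation141115a}. Everything else is a direct invocation of Theorems~\ref{thm141016a} and~\ref{lem140718e}.
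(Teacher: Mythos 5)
Your proposal is correct and matches the paper's own argument: the paper likewise deduces well-definedness from Theorem~\ref{thm141016a} together with the forward direction of Theorem~\ref{lem140718e}, and injectivity from the converse direction of Theorem~\ref{lem140718e} under the mildly noetherian hypothesis. You simply spell out the representative-independence step more explicitly than the paper does.
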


\begin{proof} This follows from Theorem~\ref{thm141016a} with Theorem~\ref{lem140718e}. For instance, assume $\psi(M',M'')=\psi(N',N'')$. Then $M'\otimes_k M''\approx N'\otimes_k N''$. Thus, $M'\approx N'$ and $M''\approx N''$ by Theorem~\ref{lem140718e}. 
\end{proof}

\begin{para}[Proof of Theorem~\ref{thm141112b}]\label{proof141021a} (a): This follows from Theorem~\ref{thm141016a}.

(b): The map $\psi$ being well-defined is due to part (a). The map $\psi$ being injective is a special case of Theorem~\ref{thm141112a} due to Lemma~\ref{lem141113a}. 
\qed
\end{para}

 We conclude by documenting some special cases of the above results.
 
\begin{cor}\label{cor140731a}
 Let $R_i$ be a local $k$-algebra for $i=1,2$. Let $X_i$ be a finitely generated $R_i$-module for $i=1,2$. 
  \enumerate
  \item One has $X_1\otimes_k X_2\in \s_0(R_1\otimes_k R_2)$ if and only if $X_i\in \mathfrak{S}_0(R_i)$ for $i=1,2$.
  \item The map $\psi:\mathfrak{S}_0(R_1)\times \mathfrak{S}_0(R_2)\rightarrow \mathfrak{S}_0(R_1\otimes_k R_2)$ given by $\psi(C_1,C_2)=C_1\otimes_k C_2$ is well-defined and injective.
\end{cor}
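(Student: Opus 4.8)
The plan is to deduce Corollary~\ref{cor140731a} as the module-level shadow of the DG results already established, exploiting the dictionary set up in the background section between the classical notions over $R$ and the DG notions over $A$. First I would observe that a local $k$-algebra $R_i$ is in particular a DG $k$-algebra concentrated in degree $0$ (so it is trivially mildly noetherian, since $R_i = (R_i)_0 = \HH_0(R_i)$ is noetherian), and a finitely generated $R_i$-module $X_i$ is a DG $R_i$-module in $D^f_b(R_i)$. Moreover $R_1 \otimes_k R_2$, which is the ordinary tensor product of commutative rings, coincides with $A := R_1 \otimes_k R_2$ viewed as a DG $k$-algebra, and $X_1 \otimes_k X_2$ coincides with the DG tensor product since there are no higher Tor terms over the field $k$. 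The key translation is that for a ring $R$ and a finitely generated module $X$, being in $\s_0(R)$ in the classical sense is exactly the condition $X \in \s(R)$ in the DG sense: the homothety $\chi_X^R : R \to \rhom_R(X,X)$ collapses to $\Hom_R(X,X)$ with vanishing higher $\ext$, and a finitely generated module admits a degreewise finite semifree (i.e.\ free) resolution.

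With this identification in place, part (1) is an immediate specialization of Theorem~\ref{thm141016a}. The forward direction needs no hypotheses beyond $X_i \in \s_0(R_i)$, and the reverse direction is covered because $R_1$ and $R_2$ are mildly noetherian and $X_i \in D^f_b(R_i)$. The only point requiring a remark is that shift-isomorphism classes in $D(R_i)$, when restricted to honest modules, recover isomorphism classes of modules: a semidualizing object over a ring concentrated in degree $0$ whose homology is concentrated in degree $0$ cannot be a nontrivial shift, so the relation $\approx$ on such objects reduces to ordinary isomorphism. This is essentially the content of Lemma~\ref{lem141113a} specialized to modules, and it is what lets us pass from $\overline{\s}$ to $\s_0$.

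For part (2), the well-definedness of $\psi$ is exactly part (1): if $C_i \in \s_0(R_i)$ then $C_1 \otimes_k C_2 \in \s_0(R_1 \otimes_k R_2)$. Injectivity is the specialization of Theorem~\ref{thm141112a} (equivalently the proof of Theorem~\ref{thm141112b}(b)): if $\psi(C_1,C_2) \cong \psi(C_1',C_2')$ as modules, then in particular $C_1 \otimes_k C_2 \approx C_1' \otimes_k C_2'$ in $D(A)$, whence $C_1 \approx C_1'$ and $C_2 \approx C_2'$ by Theorem~\ref{lem140718e}; and again, because these are genuine modules over local rings, $\approx$ forces an honest isomorphism, so $C_1 \cong C_1'$ and $C_2 \cong C_2'$.

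The main obstacle I anticipate is not any deep computation but rather the careful verification of the two translations underpinning the argument: first, that the classical semidualizing condition for a finitely generated module coincides with membership in $\s(R)$ for $R$ viewed as a DG algebra in degree $0$, and second, that the shift-isomorphism relation $\approx$ restricts to ordinary isomorphism on the subcategory of modules concentrated in degree $0$. Both are bookkeeping facts already implicit in the parenthetical ``translation'' remarks following the definitions of the Bass class and of derived $C$-reflexivity, but they must be invoked explicitly so that $\overline{\s}(R_i)$ genuinely becomes $\s_0(R_i)$ and the abstract injectivity statement yields injectivity of a map between sets of isomorphism classes of modules. Once these identifications are recorded, the corollary is a direct reading of Theorems~\ref{thm141016a} and~\ref{thm141112a}.
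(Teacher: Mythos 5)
Your proposal is correct and takes essentially the paper's own route: the paper offers no separate argument for Corollary~\ref{cor140731a}, recording it as the degree-zero specialization of Theorems~\ref{thm141016a} and~\ref{thm141112a} (via Theorem~\ref{lem140718e} and Lemma~\ref{lem141113a}), which is exactly the reduction you spell out, including the two translation facts (classical semidualizing equals DG semidualizing for modules, and $\approx$ collapses to isomorphism for nonzero modules concentrated in degree $0$). One small refinement: since $R_1\otimes_k R_2$ need not be noetherian, in the reverse direction of part (1) the degreewise finite semifree resolution of $X_1\otimes_k X_2$ required by the DG definition should be produced by tensoring finite free resolutions over $R_1$ and $R_2$ (Lemmas~\ref{lem141031a} and~\ref{lem140718a}), rather than inferred from finite generation over the tensor product alone.
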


\begin{cor}\label{cor140731b}
 Let $R_i$ be a local $k$-algebra for $i=1,2$.  
  Let $X_i\in D^f_b(R_i)$ for $i=1,2$.
  \enumerate
  \item One has $X_1\otimes_k X_2\in \mathfrak{S}(R_1\otimes_k R_2)$ if and only if $X_i\in \mathfrak{S}(R_i)$ for $i=1,2$.
  \item The map $\psi:\mathfrak{S}(R_1)\times \mathfrak{S}(R_2)\rightarrow \mathfrak{S}(R_1\otimes_k R_2)$ given by $\psi(C_1,C_2)=C_1\otimes_k C_2$ is well-defined and injective.
\end{cor}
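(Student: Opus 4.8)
The plan is to obtain this as the specialization of Theorem~\ref{thm141112b} to the case in which the DG $k$-algebras are ordinary rings, namely $A' = R_1$ and $A'' = R_2$ regarded as DG $k$-algebras concentrated in degree $0$. First I would record the translation of hypotheses. For a local $k$-algebra $R_i$ (which we take to be noetherian), viewing it as a DG algebra with $(R_i)_j = 0$ for $j \neq 0$ gives $\HH_0(R_i) = R_i$ and $\HH_j(R_i) = 0$ for $j \neq 0$; since $R_i$ is noetherian it is weakly noetherian, since $k$ is a field and $\HH_0(R_i) = R_i$ is a local $k$-algebra it is local in the sense of the paper, and since $(R_i)_0 = R_i$ is noetherian it is in fact mildly noetherian. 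Under this identification $D^f_b(R_i)$ is the usual homologically finite derived category of the ring $R_i$, so the hypothesis $X_i \in D^f_b(R_i)$ is the expected one, and $\mathfrak{S}(R_i)$ is exactly $\s(R_i)$ for $R_i$ viewed as a DG algebra.

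With these identifications, part (1) is immediate from Theorem~\ref{thm141016a} (equivalently Theorem~\ref{thm141112b}(a)) applied with $M' = X_1$, $M'' = X_2$, and $A = R_1 \otimes_k R_2$: the forward implication is the easy direction, and the converse is available because $R_1, R_2$ are mildly noetherian and $X_i \in D^f_b(R_i)$. Part (2) is then the specialization of Theorem~\ref{thm141112b}(b). Well-definedness of $\psi$ follows from part (1), and injectivity follows from the converse half of Theorem~\ref{lem140718e} (equivalently Theorem~\ref{thm141112a}) together with Lemma~\ref{lem141113a}.

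The only step requiring genuine care --- and the closest thing to an obstacle here --- is the injectivity argument, in which Lemma~\ref{lem141113a} is applied to recover shift-isomorphism from the relation $\approx$. The subtlety is that this lemma must be applied to the factors $A' = R_1$ and $A'' = R_2$, which are local by hypothesis, and not to the tensor product $R_1 \otimes_k R_2$, which need not be local (for instance $\bbc \otimes_{\bbr} \bbc$ is not local). Concretely, from $X_1 \otimes_k X_2 \sim Y_1 \otimes_k Y_2$ one first gets $X_1 \otimes_k X_2 \approx Y_1 \otimes_k Y_2$, then $X_i \approx Y_i$ by the injectivity of $\psi$ on the $\approx$-classes $\overline{\s}$, and only afterwards, using locality of each $R_i$, does Lemma~\ref{lem141113a} upgrade $X_i \approx Y_i$ to $X_i \simeq \Sigma^{n_i} Y_i$; since $R_i$ is local one has $\overline{\s}(R_i) = \mathfrak{S}(R_i)$, yielding injectivity of $\psi$ on $\mathfrak{S}(R_1) \times \mathfrak{S}(R_2)$. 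Everything else is routine bookkeeping in translating the ring-theoretic hypotheses into their DG counterparts.
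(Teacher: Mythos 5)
Your proposal is correct and takes essentially the same approach as the paper: the corollary is presented there precisely as the specialization of Theorem~\ref{thm141112b} (via Theorems~\ref{thm141016a} and \ref{thm141112a} and Lemma~\ref{lem141113a}) to ordinary local noetherian $k$-algebras viewed as DG $k$-algebras concentrated in degree $0$, which is exactly what you do. Your care on the injectivity step---applying Lemma~\ref{lem141113a} only to the local factors $R_1, R_2$ and not to the possibly non-local tensor product---correctly mirrors the paper's passage through $\overline{\s}$-classes in Theorem~\ref{thm141112a}.
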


\begin{cor}\label{cor140731c}
 Let $R_i$ be a $k$-algebra for $i=1,2$. Let $X_i\in D^f_b(R_i)$ for $i=1,2$. Then the map $\psi:\overline{\mathfrak{S}}(R_1)\times \overline{\mathfrak{S}}(R_2)\rightarrow \overline{\mathfrak{S}}(R_1\otimes_k R_2)$ given by $\psi(C_1,C_2)=C_1\otimes_k C_2$ is well-defined and injective.
\end{cor}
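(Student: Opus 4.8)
The plan is to obtain the corollary as a direct specialization of Theorem~\ref{thm141112a}. First I would view each commutative $k$-algebra $R_i$ as a DG $k$-algebra concentrated in homological degree $0$, so that $\HH_0(R_i)=R_i$ and $\HH_j(R_i)=0$ for $j\neq 0$. Interpreting the $R_i$ as noetherian (in line with the standing convention on rings), this DG algebra is mildly noetherian: $\HH_0(R_i)=R_i$ is noetherian, each $\HH_j(R_i)$ is a finitely generated $R_i$-module (being $0$ for $j\neq 0$), and $(R_i)_0=R_i$ is noetherian. Moreover $R_i\not\simeq 0$ since $R_i$ contains $k$, so Assumption~\ref{assump141124a} is met with $A'=R_1$ and $A''=R_2$.

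Next I would record the identifications needed to match the two settings. Because the tensor product of two complexes concentrated in degree $0$ is again concentrated in degree $0$, the DG $k$-algebra $A:=R_1\otimes_k R_2$ coincides with the ordinary $k$-algebra $R_1\otimes_k R_2$. I would also observe that, for a ring regarded as a DG algebra in this way, the notion of semidualizing DG module and the relation $\approx$ specialize to their classical counterparts, so the sets $\overline{\s}(R_i)$ and $\overline{\s}(R_1\otimes_k R_2)$ in the statement agree with the DG versions appearing in Theorem~\ref{thm141112a}. (The auxiliary objects $X_i\in D^f_b(R_i)$ do not enter the definition of $\psi$ and may be disregarded.) With these identifications, Theorem~\ref{thm141112a} applies directly: the assignment $\psi(C_1,C_2)=C_1\otimes_k C_2$ is well-defined since $C_1\otimes_k C_2$ is semidualizing over $R_1\otimes_k R_2$ whenever $C_1,C_2$ are semidualizing (Theorem~\ref{thm141016a}) and respects $\approx$ in each variable (the forward direction of Theorem~\ref{lem140718e}), and $\psi$ is injective by the converse half of Theorem~\ref{lem140718e}.

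Since the statement is a specialization, there is no substantive obstacle to overcome; the only points requiring care are the bookkeeping ones above. The step I expect to carry the most weight is confirming the mildly noetherian hypothesis of Theorem~\ref{thm141112a}: this is precisely the input used by the converse half of Theorem~\ref{lem140718e}, hence by the injectivity of $\psi$, so one must ensure the $R_i$ are treated as noetherian rather than arbitrary $k$-algebras. The remaining step, checking that the DG-theoretic sets $\overline{\s}$ reduce to the classical ones for ordinary rings, is routine, and everything else is a direct quotation of Theorem~\ref{thm141112a}.
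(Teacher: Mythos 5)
Your proposal is correct and matches the paper's approach exactly: the paper offers no separate argument for Corollary~\ref{cor140731c}, presenting it as a direct specialization of Theorem~\ref{thm141112a} to rings viewed as DG $k$-algebras concentrated in degree $0$, which is precisely what you do. Your bookkeeping points — that mild noetherianity reduces to noetherianity of the $R_i$, that $R_i\not\simeq 0$, and that the $X_i$ in the statement are vestigial — are all apt, and you correctly identify the noetherian hypothesis as the input needed for injectivity via the converse half of Theorem~\ref{lem140718e}.
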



\end{document}